\newtheorem{theorem}{Theorem}[section]
\newtheorem*{theorem*}{Theorem}
\newtheorem{definition}[theorem]{Definition}
\newtheorem{conjecture}[theorem]{Conjecture}
\newtheorem{remark}[theorem]{Remark}
\newcommand{\R}{\mathbb{R}}
\def\Ric{\text{Ric}}
\def\a{\alpha}
\def\l{\lambda}
\def\R{\mathbb{R}}
\def\vp{\varphi}
\def\Ric{\operatorname{Ric}}
\def\tr{\operatorname{tr}}
\numberwithin{equation}{section}
\newcommand*\owedge{\mathpalette\@owedge\relax}
\newcommand*\@owedge[1]{%
  \mathbin{%
    \ooalign{%
      $#1\m@th\bigcirc$\cr
      \hidewidth$#1\m@th\wedge$\hidewidth\cr
    }%
  }%
}
\begin{document}

\title[curvature operator of the second kind]{Manifolds with $4\frac{1}{2}$-positive curvature operator of the second kind}

\dedicatory{Dedicated to Professor Peter Li on the occasion of his 70th birthday.}

\author{Xiaolong Li}
\address{Department of Mathematics, Statistics and Physics, Wichita State University, Wichita, KS, 67260}
\email{xiaolong.li@wichita.edu}

\subjclass[2020]{53C20, 53C21, 53C24}

\keywords{Curvature operator of the second kind, Nishikawa's conjecture, sphere theorem, positive isotropic curvature}

\begin{abstract}
We show that a closed four-manifold with $4\frac{1}{2}$-positive curvature operator of the second kind is diffeomorphic to a spherical space form. The curvature assumption is sharp as both $\mathbb{CP}^2$ and $\mathbb{S}^3 \times \mathbb{S}^1$ have $4\frac{1}{2}$-nonnegative curvature operator of the second kind. 
In higher dimensions $n\geq 5$, we show that closed Riemannian manifolds with $4\frac{1}{2}$-positive curvature operator of the second kind are homeomorphic to spherical space forms. 
These results are proved by showing that $4\frac{1}{2}$-positive curvature operator of the second kind implies both positive isotropic curvature and positive Ricci curvature. Rigidity results for $4\frac{1}{2}$-nonnegative curvature operator of the second kind are also obtained.  
\end{abstract}

\maketitle

\section{Introduction}
In 1986, Nishikawa \cite{Nishikawa86} conjectured that a closed Riemannian manifold with positive (respectively, nonnegative) curvature operator of the second kind is diffeomorphic to a spherical space form (respectively, Riemannian locally symmetric space). 
Recall that the curvature tensor acts on the space of symmetric two-tensors $S^2(T_pM)$ via  
\begin{equation}\label{eq 1.1}
    \mathring{R}(e_i \odot e_j) =\sum_{k,l=1}^n R_{iklj} e_k \odot e_l,
\end{equation}
where $\{e_1, \cdots, e_n\}$ is an orthonormal basis of the tangent space $T_pM$ at $p$ and $\odot$ denotes the symmetric product. 
In Nishikawa's conjecture, curvature operator of the second kind, denoted by $\mathring{R}$ in this paper, refers to the bilinear form 
$$\mathring{R}:S^2_0(T_pM) \times S^2_0(T_pM) \to \R$$ obtained by restricting the symmetric linear map $\mathring{R}:S^2(T_pM) \to S^2(T_pM)$ defined in 
\eqref{eq 1.1} 
to the space of traceless symmetric two-tensors $S^2_0(T_pM)$.
We refer the reader to Section 2 for a detailed discussion on curvature operator of the second kind.

Recently, the positive case of Nishikawa's conjecture was proved by Cao, Gursky and Tran \cite{CGT21} and the nonnegative case was settled by the author \cite{Li21}. In fact, the assumption can be weakened. 
\begin{theorem}\label{thm 3+}
Let $(M^n,g)$ be a closed Riemannian manifold of dimension $n\geq 3$. 
\begin{enumerate}
    \item  If $M$ has three-positive curvature operator of the second kind, then $M$ is diffeomorphic to a spherical space form;
    \item If $M$ has three-nonnegative curvature operator of the second kind, then $M$ is either flat, or diffeomorphic to a spherical space form, or isometric to a quotient of a compact irreducible symmetric space\footnote{After the submission of this paper, Nienhaus, Petersen and Wink \cite{NPW22} proved that a closed $n$-dimensional Riemannian manifold with $\lfloor \frac{n+2}{2} \rfloor$-nonnegative curvature operator of the second kind is either a rational homology sphere or flat, thus ruling out the third possibility here.}.
\end{enumerate}
\end{theorem}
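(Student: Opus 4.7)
The plan is to reduce both statements to known Ricci-flow convergence theorems by translating the spectral hypothesis on $\mathring R$ into a standard Ricci-flow-invariant curvature cone. Cao--Gursky--Tran \cite{CGT21} proved the (full) positive case by showing $\mathring R > 0$ implies $2$-positive curvature operator of the first kind, after which B\"ohm--Wilking's convergence theorem yields a spherical space form. I would adapt this scheme to the relaxed hypothesis: the algebraic step should show that three-positivity of $\mathring R$ implies a Ricci-flow-invariant curvature condition strong enough to invoke an existing sphere theorem (for instance, $2$-positive first-kind operator via B\"ohm--Wilking, or positive isotropic curvature on $M \times \R^2$ via Brendle--Schoen).

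The heart of the proof is the algebraic step. At each point $p \in M$ and each orthonormal quadruple $\{e_1, e_2, e_3, e_4\} \subset T_pM$, I would construct an orthonormal triple $\{h_1, h_2, h_3\}$ of traceless symmetric $2$-tensors whose Rayleigh sum $\sum_{i=1}^3 \mathring R(h_i, h_i)$ reproduces, up to a positive constant, the target Ricci-flow-invariant curvature expression. Natural test tensors are trace-compensating diagonal differences of the form
\[
e_a \odot e_a - e_b \odot e_b
\]
together with normalized off-diagonal products $\sqrt{2}\, e_a \odot e_b$ for $a \neq b$. Three-positivity of $\mathring R$ — i.e.\ the sum of its three smallest eigenvalues is positive — then forces the Rayleigh sum of any orthonormal triple to be positive, which yields the desired curvature inequality. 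Part~(1) then follows by invoking the appropriate Ricci-flow sphere theorem on the resulting invariant cone.

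For part~(2), when $\mathring R$ is only three-nonnegative, the same algebraic step produces the weak form of the invariant curvature inequality, which is preserved by the Ricci flow. Applying Hamilton's strong maximum principle (in the Brendle--Ni--Wilking form) shows that the kernel of the relevant curvature quantity is parallel, so the universal cover splits isometrically as a de Rham product. Berger's holonomy classification combined with Brendle's and Ni--Wilking's rigidity results then yields the trichotomy: flat, spherical space form, or a quotient of a compact irreducible symmetric space.

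The main obstacle is the algebraic step: one must pin down three mutually orthogonal traceless test tensors whose Rayleigh sum under $\mathring R$ reproduces exactly the chosen Ricci-flow-invariant expression, with no residual cross terms of indefinite sign. This is the three-tensor analogue of the two-tensor computation at the core of \cite{CGT21}, and extending it requires a careful combinatorial balancing of sectional-curvature and mixed components. Once the algebraic lemma is established, the analytic conclusions are direct applications of the Ricci-flow sphere and rigidity theorems, and the proof is complete.
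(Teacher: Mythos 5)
Your overall strategy---show that three-positive $\mathring R$ lands you in a Ricci-flow-invariant cone and then invoke a sphere theorem---is the right shape, and the algebraic step you describe (choosing orthonormal triples in $S^2_0$ whose Rayleigh sum reproduces the target curvature expression) is in fact how the underlying lemma is proved. But there are two substantive problems with the proposal as written.

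First, the target cone is mis-identified, and the claimed precedent is misremembered. Cao--Gursky--Tran did \emph{not} prove that $\mathring R>0$ implies $2$-positive curvature operator of the first kind; what they showed is that two-positive $\mathring R$ implies $M\times\R$ has positive isotropic curvature (PIC1). The correct condition to aim for here is exactly this PIC1 condition, after which Brendle's 2008 convergence theorem for metrics with $M\times\R$ PIC gives part (1). Your proposal hedges between ``$2$-positive $\hat R$'' and ``PIC on $M\times\R^2$'' (PIC2), both of which are strictly stronger than PIC1, and neither of which follows from three-positive $\mathring R$ (indeed, for metrics near $\mathbb{S}^{n-1}\times\R$ the first-kind operator $\hat R$ has kernel, so no ``$k$-positive $\mathring R$'' hypothesis that those metrics' small perturbations satisfy could force $2$-positive $\hat R$). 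So the algebraic lemma you propose to prove is aimed at the wrong cone; with the wrong target the ``careful combinatorial balancing'' you anticipate would not close.

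Second, part (2) has a genuine gap: after reducing to the classification of simply-connected, locally irreducible closed manifolds with $M\times\R$ having nonnegative isotropic curvature, that classification produces \emph{four} possibilities, one of which is a K\"ahler manifold biholomorphic to $\mathbb{CP}^m$. That case does not appear in the conclusion of the theorem and must be excluded by a separate argument (the actual proof uses a lemma that a K\"ahler manifold with four-nonnegative $\mathring R$ is flat). Your sketch --- strong maximum principle, de Rham splitting, Berger holonomy --- never rules out the K\"ahler alternative, so the proof of part (2) is incomplete even granted the algebraic step. The paper's route also avoids the de Rham-splitting analysis by instead invoking the fact that an $n$-manifold with $n$-nonnegative $\mathring R$ is either flat or locally irreducible, which short-circuits the product case directly.
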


The key observation is that if the curvature operator of the second kind of $M$ is three-positive (respectively, three-nonnegative), then $M\times \R$ has positive (respectively, nonnegative) isotropic curvature.
This was first proved in \cite{CGT21} under the stronger assumption of two-positivity/two-nonnegativity of the curvature operator of the second kind and was then weakened to three-positivity/three-nonnegativity in \cite{Li21}. 
Part (1) of Theorem \ref{thm 3+} then follows from a result of Brendle \cite{Brendle08}, which asserts that  the normalized Ricci flow evolves an initial metric satisfying the property that $M\times \R$ has positive isotropic curvature into a limit metric with constant sectional curvature, thus implying that $M$ must be diffeomorphic to a spherical space form. 
The key observation in proving part (2) of Theorem \ref{thm 3+} is that an $n$-manifold with $n$-nonnegative curvature operator must be either flat or locally irreducible (see \cite[Theorem 1.8]{Li21}). This allows one to invoke the classification of simply-connected locally irreducible manifolds satisfying the property that $M\times \R$ has nonnegative isotropic curvature (see for example \cite{Brendle10} or \cite{Brendle10book}). The last step is to rule out K\"ahler manifolds by showing that a K\"ahler manifold with four-nonnegative curvature operator of the second kind must be flat (see \cite[Theorem 1.9]{Li21}). 

Another important result obtained by Cao, Gursky and Tran in \cite{CGT21} states that 
\begin{theorem}\label{thm 4 PIC}
A closed simply-connected Riemannian manifold of dimension $n\geq 4$ with four-positive curvature operator of the second kind is homeomorphic to the $n$-sphere. 
\end{theorem}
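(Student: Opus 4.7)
The plan is to deduce Theorem~\ref{thm 4 PIC} by establishing, as the main algebraic ingredient, that $4$-positive curvature operator of the second kind at a point implies positive isotropic curvature (PIC) at that point, and then invoking the Micallef-Moore sphere theorem. Recall that $(M^n,g)$ has PIC at $p$ when for every orthonormal 4-frame $\{e_1,\ldots,e_4\}\subset T_pM$,
\[
R_{1313}+R_{1414}+R_{2323}+R_{2424}-2R_{1234}>0.
\]
The Micallef-Moore theorem then asserts that a closed simply-connected Riemannian manifold of dimension $n\geq 4$ with PIC is homeomorphic to $\mathbb{S}^n$, so it suffices to prove the pointwise implication.

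The implication is proved by constructing, for each orthonormal $\{e_1,\ldots,e_4\}\subset T_pM$, four orthonormal traceless symmetric $2$-tensors $\psi_1,\psi_2,\psi_3,\psi_4\in S^2_0(T_pM)$ for which
\[
\sum_{i=1}^{4} \langle \mathring{R}\psi_i,\psi_i \rangle \;\leq\; c\bigl(R_{1313}+R_{1414}+R_{2323}+R_{2424}-2R_{1234}\bigr)
\]
for some constant $c>0$. Since $\mathring{R}$ is symmetric on $S^2_0(T_pM)$, the variational characterization of eigenvalue sums gives
\[
0 < \lambda_1 + \lambda_2 + \lambda_3 + \lambda_4 \;\leq\; \sum_{i=1}^4 \langle \mathring{R}\psi_i,\psi_i\rangle \;\leq\; c\cdot(R_{1313}+R_{1414}+R_{2323}+R_{2424}-2R_{1234}),
\]
forcing positive isotropic curvature at $p$.

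The natural first two test tensors come from the real and imaginary parts of $\zeta\odot\eta$ with $\zeta=e_1+ie_2$ and $\eta=e_3+ie_4$, namely
\[
\psi_1 = e_1\odot e_3 - e_2\odot e_4, \qquad \psi_2 = e_1\odot e_4 + e_2\odot e_3,
\]
which are orthonormal and traceless. A direct computation using the first Bianchi identity (which rewrites cross terms such as $\langle\mathring{R}(e_1\odot e_3),\,e_2\odot e_4\rangle$ as linear combinations of $R_{1234}$ and $R_{1342}$, with the $R_{1342}$ contributions cancelling between $\psi_1$ and $\psi_2$) yields
\[
\langle\mathring{R}\psi_1,\psi_1\rangle + \langle\mathring{R}\psi_2,\psi_2\rangle = \tfrac{1}{2}(R_{1313}+R_{1414}+R_{2323}+R_{2424}) + 3R_{1234}.
\]
The remaining task is to choose $\psi_3,\psi_4$ orthogonal to $\psi_1,\psi_2$ -- plausible building blocks being $e_1\odot e_3+e_2\odot e_4$ and $e_1\odot e_4-e_2\odot e_3$ arising from the conjugate isotropic plane spanned by $\zeta$ and $\bar\eta$, or diagonal tensors like $e_1\odot e_1-e_2\odot e_2$ and $e_3\odot e_3-e_4\odot e_4$ -- so that the full four-tensor sum becomes the desired positive multiple of the isotropic curvature.

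The principal obstacle is precisely this last step. One must simultaneously (i) cancel the parasitic $R_{1342}$ contributions introduced via the first Bianchi identity, (ii) correct the coefficient of $R_{1234}$ from the $+3$ given by $\psi_1,\psi_2$ down to the target value $-2c$ appearing in the isotropic curvature for the overall positive scalar $c$, and (iii) retain orthonormality and tracelessness of the full frame $\{\psi_1,\psi_2,\psi_3,\psi_4\}$. Once this algebraic identity is in hand, Theorem~\ref{thm 4 PIC} follows immediately from the Micallef-Moore sphere theorem.
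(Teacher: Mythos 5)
Your proposal correctly identifies the strategy the paper uses: show that four-positivity of $\mathring{R}$ implies positive isotropic curvature, then invoke the Micallef--Moore sphere theorem. This is exactly how the paper treats Theorem~\ref{thm 4 PIC} (attributing the algebraic implication to Cao--Gursky--Tran~\cite{CGT21}). However, your write-up never actually establishes the algebraic implication --- you say so yourself (``the principal obstacle is precisely this last step''). As it stands, this is a plan, not a proof, and the missing step is precisely the entire content of the lemma.

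Moreover, the specific completion you are gesturing at is not going to work. With $\psi_1=e_1\odot e_3-e_2\odot e_4$ and $\psi_2=e_1\odot e_4+e_2\odot e_3$ you correctly find a contribution of $+3R_{1234}$ (after normalizing; note also that these tensors have norm $2$, not $1$, so ``orthonormal'' is off by a factor). The natural choices for $\psi_3,\psi_4$ from the conjugate isotropic plane, namely $e_1\odot e_3+e_2\odot e_4$ and $e_1\odot e_4-e_2\odot e_3$, contribute $-3R_{1234}$ exactly, so the four-tensor sum is a positive multiple of $R_{1313}+R_{1414}+R_{2323}+R_{2424}$ with the crucial $-2R_{1234}$ term gone entirely; that only gives $R_{1313}+R_{1414}+R_{2323}+R_{2424}>0$, which is strictly weaker than PIC. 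Indeed the set $\{\psi_1,\psi_2,\psi_3,\psi_4\}$ is invariant under the swap $e_3\leftrightarrow e_4$, which reverses the sign of $R_{1234}$, so no $R_{1234}$ can survive. Using the diagonal tensors instead (e.g., $e_1\odot e_1-e_2\odot e_2$ and $e_3\odot e_3-e_4\odot e_4$) introduces parasitic $R_{1212}$ and $R_{3434}$ terms that do not cancel. The way out --- which is what the present paper does in its proof of the sharper Theorem~\ref{thm 4.5 PIC alg}, and what \cite{CGT21} does for four-positivity --- is to take a positive combination of the traces of $\mathring{R}$ over \emph{several} different four-dimensional subspaces of $S^2_0(V)$, built from a larger family of test tensors (the paper uses nine, $\vp_1,\dots,\vp_9$, and adds six such inequalities). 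A single quadruple of orthonormal tensors is not sufficient, and your proposal does not identify which multi-subspace combination recovers a positive multiple of the isotropic curvature.
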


The proof of Theorem \ref{thm 4 PIC} relies on the observation that four-positive curvature operator of the second kind implies positive isotropic curvature.
Theorem \ref{thm 4 PIC} then follows immediately from the work of Micallef and Moore \cite{MM88}.
The curvature assumption in Theorem \ref{thm 4 PIC} is sharp in the sense that it cannot weakened to five-positive curvature operator of the second kind in general. Indeed, both the complex projective space $\mathbb{CP}^2$ and the cylinder $\mathbb{S}^3 \times \mathbb{S}^1$ has five-positive curvature operator of the second kind. 

In this paper, we prove a sharper result by weakening the assumption in Theorem \ref{thm 4 PIC} to $4\frac 1 2$-positive curvature operator of the second kind, whose meaning we explain below (see also Definitions \ref{def curvature operator of the second kind} and \ref{def curvature operator of the second kind on manifolds}).

Let $R$ be an algebraic curvature operator on a Euclidean vector space of dimension $n\geq 2$. Denote by $S^2_0(V)$ the space of traceless symmetric two-tensors on $V$ and $N=\dim(S^2_0(V))=\frac{(n-1)(n+2)}{2}$.
Let $\l_1 \leq \cdots \leq \l_N$ be the eigenvalues\footnote{The eigenvalues of $\mathring{R}$ are the eigenvalues of the matrix whose $(i,j)$ entry is $\mathring{R}(\vp_i,\vp_j)$, where $\{\vp_i\}_{i=1}^N$ is an orthonormal basis of $S^2_0(V)$. This definition is independent of the choices of the orthonormal basis.} of the curvature operator of the second kind $\mathring{R}$. We say $R$ has $4\frac 1 2$-positive curvature operator of the second kind if 
$$\l_1 +\l_2 +\l_3 +\l_4 +\frac 1 2 \l_5 >0.$$ 
Clearly, $4\frac 1 2$-positive curvature operator of the second kind is weaker than four-positive curvature operator of the second kind but stronger than five-positive curvature operator of the second kind.
More generally, for $1\leq k \leq N$ and $ 0 \leq  \a \leq  1$ satisfying $k+\a \leq N$, we say $R$ has $(k+\a)$-positive (respectively, $(k+\a)$-nonnegative) curvature operator of the second kind if 
$$\l_1 +\cdots +\l_k +\a  \l_{k+1} > ( \text{respectively,} \geq ) \  0.$$

The main result of this paper states the following. 
\begin{theorem}\label{thm 4.5 PIC}
A closed Riemannian manifold of dimension $n\geq 4$ with $4\frac 1 2$-positive curvature operator of the second kind is homeomorphic to a spherical space form. Moreover, if either $n=4$ or $n\geq 12$, then the ``homeomorphism" can be upgraded to ``diffeomorphism". 
\end{theorem}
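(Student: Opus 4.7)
The plan is to establish Theorem \ref{thm 4.5 PIC} by extracting from the $4\frac{1}{2}$-positivity hypothesis two pointwise consequences, positive isotropic curvature (PIC) and positive Ricci curvature, and then invoking known classification theorems. Positive Ricci and Myers' theorem force $|\pi_1(M)| < \infty$, so the universal cover $\widetilde M$ is a closed, simply connected Riemannian manifold with PIC, and the Micallef-Moore theorem identifies $\widetilde M$ topologically with $\mathbb{S}^n$. Hence $M$ is homeomorphic to a spherical space form.

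The main algebraic step is the pointwise implication $4\frac{1}{2}$-positive $\Rightarrow$ PIC. Fix an orthonormal 4-frame $\{e_1, e_2, e_3, e_4\} \subset T_pM$ and recall the isotropic curvature
$$K_{\text{iso}} = R_{1313} + R_{1414} + R_{2323} + R_{2424} - 2R_{1234}.$$
The plan is to construct five mutually orthogonal unit traceless symmetric 2-tensors $\varphi_1, \ldots, \varphi_5 \in S^2_0(T_pM)$ and a constant $c > 0$ such that
$$\sum_{i=1}^{4} \mathring R(\varphi_i, \varphi_i) + \tfrac{1}{2}\mathring R(\varphi_5, \varphi_5) = c\, K_{\text{iso}}.$$
For $\varphi_1, \ldots, \varphi_4$ I would use the standard off-diagonal combinations built from $\{e_1, \ldots, e_4\}$, for instance
$$\varphi_1 = \tfrac{1}{\sqrt{2}}(e_1 \odot e_3 - e_2 \odot e_4), \quad \varphi_2 = \tfrac{1}{\sqrt{2}}(e_1 \odot e_4 + e_2 \odot e_3),$$
together with two analogous tensors of opposite sign. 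Their $\mathring R$-values collectively produce the four diagonal sectional terms $R_{ijij}$ plus residual cross-terms involving $R_{1234}$. The fifth tensor $\varphi_5$ is a traceless linear combination of the diagonal tensors $e_i \odot e_i$ supported on $\text{span}\{e_1, \ldots, e_4\}$ (with a correction in the orthogonal complement), whose $\tfrac{1}{2}$-weighted contribution is designed precisely to cancel the residual diagonal terms and install the coefficient $-2$ on $R_{1234}$. For any orthonormal quintuple of traceless symmetric 2-tensors, the variational characterization of partial eigenvalue sums gives
$$\sum_{i=1}^{4} \mathring R(\varphi_i, \varphi_i) + \tfrac{1}{2} \mathring R(\varphi_5, \varphi_5) = \tfrac{1}{2}\left[\sum_{i=1}^{5} \mathring R(\varphi_i, \varphi_i) + \sum_{i=1}^{4} \mathring R(\varphi_i, \varphi_i)\right] \geq \lambda_1 + \lambda_2 + \lambda_3 + \lambda_4 + \tfrac{1}{2}\lambda_5 > 0.$$
The main obstacle is pinning down $\varphi_5$ with the exact normalization and trace-correction so that the identity with $K_{\text{iso}}$ holds on the nose with the critical coefficient $\tfrac{1}{2}$ -- any larger coefficient would trivialize the conclusion, and any smaller one would leave uncancelled terms.

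For positive Ricci, a parallel and technically easier computation at a unit vector $v \in T_pM$ using the tensors $v \odot w$ for unit $w \perp v$ together with a trace-corrected diagonal tensor yields $\mathrm{Ric}(v,v) > 0$ under the same hypothesis. Finally, the diffeomorphism refinement in $n = 4$ follows from Hamilton's theorem on Ricci flow with surgery for four-manifolds with PIC, while for $n \geq 12$ Brendle's corresponding surgery theorem for PIC manifolds applies: since $\pi_1(M)$ is finite, the surgery process terminates in disjoint spherical space forms with no $\mathbb{S}^{n-1} \times \mathbb{R}$ ends, proving the diffeomorphism claim.
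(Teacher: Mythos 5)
Your high-level strategy mirrors the paper exactly: deduce positive Ricci (hence finite $\pi_1(M)$ and compactness of $\widetilde M$), deduce PIC, apply Micallef--Moore for the homeomorphism, and apply Hamilton ($n=4$) / Brendle ($n\ge 12$) for the diffeomorphism. The weak link is the algebraic step $4\tfrac12$-positive $\Rightarrow$ PIC, where your proposed construction does not work.

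You want five mutually orthogonal unit tensors $\varphi_1,\dots,\varphi_5\in S^2_0$ with
\[
\sum_{i=1}^{4}\mathring R(\varphi_i,\varphi_i)+\tfrac12\mathring R(\varphi_5,\varphi_5)=c\,K_{\mathrm{iso}},\qquad c>0,
\]
taking $\varphi_1,\dots,\varphi_4$ among the off-diagonal tensors $\tfrac{1}{\sqrt2}(e_1\odot e_3\pm e_2\odot e_4)$, $\tfrac{1}{\sqrt2}(e_1\odot e_4\pm e_2\odot e_3)$, and $\varphi_5$ a \emph{diagonal} (trace-corrected) tensor. This cannot produce $K_{\mathrm{iso}}$. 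First, your four off-diagonal tensors yield
\[
\sum_{i=1}^{4}\mathring R(\varphi_i,\varphi_i)=2\bigl(R_{1313}+R_{1414}+R_{2323}+R_{2424}\bigr),
\]
with the $R_{1234}$, $R_{1342}$, $R_{1423}$ contributions cancelling in pairs. Second, and more fundamentally, any diagonal tensor $\varphi_5=\sum_i\alpha_i\,e_i\odot e_i$ satisfies $\mathring R(\varphi_5,\varphi_5)=\sum_{i,j}\alpha_i\alpha_j R_{ijji}$, which involves only sectional curvatures $R_{ijij}$ and never $R_{1234}$. So the combination you build has no $R_{1234}$ term at all, and can never equal $c\,K_{\mathrm{iso}}$, which carries the essential $-2R_{1234}$. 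In fact one can check more generally: if you restrict to the natural nine tensors built from $\{e_1,\dots,e_4\}$ (the three traceless diagonal ones plus the six $e_i\odot e_j\pm e_k\odot e_l$), then no choice of ``four full plus one half'' among them produces a multiple of $K_{\mathrm{iso}}$ --- the coefficient of $R_{1212}+R_{3434}$ cannot be killed without leaving an irreparable deficit elsewhere. That is precisely why the paper does \emph{not} use a single Rayleigh-type inequality: it constructs all nine tensors $\vp_1,\dots,\vp_9$, writes down \emph{six} different $4\tfrac12$-positivity inequalities each of the form $\mathring R(\vp_1,\vp_1)+\mathring R(\vp_5,\vp_5)+\mathring R(\vp_6,\vp_6)+\mathring R(\vp_j,\vp_j)+\tfrac12\mathring R(\vp_k,\vp_k)\ge 0$ over suitable pairs $(j,k)$, and \emph{averages} them. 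Only after this averaging do the $R_{1212}+R_{3434}$ terms and the $R_{1342},R_{1423}$ terms cancel, leaving exactly $27\,K_{\mathrm{iso}}\ge 0$. Your ``pin down $\varphi_5$'' plan would need to reproduce this averaging effect with a single quintuple, and the obstruction above shows that cannot be done with the class of tensors you propose. To repair the argument, replace the single-inequality step with the paper's six-fold averaging over the nine canonical tensors.

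Your positive-Ricci sketch and the reduction to Hamilton/Brendle are consistent with the paper's argument (the paper passes to $\widetilde M$ and notes it contains no nontrivial incompressible $(n-1)$-space forms, which is equivalent in effect to your remark about the surgery process having no $\mathbb{S}^{n-1}\times\mathbb R$ ends).
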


We would like to point out that the curvature assumption in Theorem \ref{thm 4.5 PIC}  in general cannot be weakened to $(4+\a)$-positive curvature operator of the second kind for any $\a >\frac 1 2$. 
For the complex projective space $\mathbb{CP}^2$ with the Fubini-Study metric, the eigenvalues of $\mathring{R}$ are given by $\{-2,-2,-2,4,4,4,4,4,4\}$, up to scaling (see \cite{BK78} or \cite{CGT21}). For $\mathbb{S}^3 \times \mathbb{S}^1$ with the product metric, the eigenvalues of $\mathring{R}$ are given by $\{-\frac 1 2,0,0,0,1,1,1,1,1\}$, up to scaling (see \cite{Li21}). Therefore, both $\mathbb{CP}^2$ and $\mathbb{S}^3 \times \mathbb{S}^1$ have $(4+\a)$-positive curvature operator of the second kind for any $\a \in (\frac 1 2 ,1]$ but not for any $\a \in [0, \frac 1 2]$.
 
We also prove a rigidity result for $4\frac 1 2$-nonnegative curvature operator of the second kind. 
\begin{theorem}\label{thm 4.5 weakly PIC}
Let $(M^n,g)$ be a closed non-flat Riemannian manifold of dimension $n\geq 4$. 
Suppose that $M$ has $4\frac 1 2$-nonnegative curvature operator of the second kind. 
Then one of the following statements holds:
\begin{enumerate}
    \item $M$ is homeomorphic (diffeomorphic if $n=4$ or $n\geq 12$) to a spherical space form;
    \item $n=2m$ and the universal cover of $M$ is a K\"ahler manifold biholomorphic to $\mathbb{CP}^m$;
    \item $n=4$ and the universal cover of $M$ is diffeomorphic to $\mathbb{S}^3 \times \R$'; 
    \item $n\geq 5$ and $M$ is isometric to a quotient of a compact irreducible symmetric space\footnote{This case can be ruled out using \cite[Theorem B]{NPW22}.}.
\end{enumerate}
\end{theorem}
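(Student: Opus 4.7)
The plan is to follow the template of part (2) of Theorem~\ref{thm 3+}: first reduce the spectral hypothesis on $\mathring R$ to nonnegative isotropic curvature on $M\times\R$, then invoke the classification of such manifolds and handle the degenerate cases by direct eigenvalue bookkeeping.

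The central algebraic step is to prove that $4\frac 1 2$-nonnegative curvature operator of the second kind implies that $M\times\R$ has nonnegative isotropic curvature. Given an orthonormal $4$-frame $\{e_1,\dots,e_4\}$ in $T_pM\oplus\R$, the strategy is to construct five orthonormal traceless symmetric $2$-tensors $\vp_1,\dots,\vp_5\in S^2_0(T_pM)$ such that
\[
R_{1313}+R_{1414}+R_{2323}+R_{2424}-2R_{1234} \;=\; c\lf(\sum_{i=1}^{4}\mathring R(\vp_i,\vp_i)+\tfrac 1 2\mathring R(\vp_5,\vp_5)\ri)
\]
for a universal constant $c>0$. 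Since the infimum of $\sum_{i=1}^4\mathring R(\vp_i,\vp_i)+\tfrac 1 2\mathring R(\vp_5,\vp_5)$ over orthonormal quintuples equals $\l_1+\l_2+\l_3+\l_4+\tfrac 1 2 \l_5$, the hypothesis yields nonnegativity of the left-hand side. The sharp coefficient $\tfrac 1 2$ is dictated by the borderline examples $\mathbb{CP}^2$ and $\mathbb{S}^3\times\mathbb{S}^1$, which must both saturate the inequality exactly, so the $\vp_i$'s must be chosen with care and the first Bianchi identity used judiciously to collapse the naive bounds into this sharp one. This is the principal obstacle of the proof.

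Once $M\times\R$ is shown to have nonnegative isotropic curvature, I would invoke the classification theorem of Brendle for such manifolds (see \cite{Brendle10book}): the universal cover $\tilde M$ either (a) is diffeomorphic to $\mathbb{S}^n$ (via Ricci flow convergence), (b) is K\"ahler and biholomorphic to $\mathbb{CP}^{n/2}$, (c) is isometric to a compact irreducible symmetric space, or (d) splits isometrically as a nontrivial Riemannian product. Case (a) together with Theorem~\ref{thm 4.5 PIC} gives conclusion (1); case (b) gives (2), and is consistent with the hypothesis since the Fubini--Study $\mathbb{CP}^m$ has $\mathring R$-spectrum $\{-2,-2,-2,4,\dots,4\}$ up to scale, saturating $4\frac 1 2$-nonnegativity exactly; case (c) with $n\geq 5$ gives (4).

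It remains to analyze case (d). For a Riemannian splitting $\tilde M=N_1\times N_2$ with $\dim N_i\geq 1$, the cross tensors $e_i\odot e_j$ with $e_i\in TN_1, e_j\in TN_2$ span a $(\dim N_1)(\dim N_2)$-dimensional subspace of $S^2_0(T\tilde M)$ on which $\mathring R$ vanishes, contributing that many zero eigenvalues, while a further ``vertical'' traceless mode built from $g|_{N_1}$ and $g|_{N_2}$ contributes a strictly negative eigenvalue whenever $\tilde M$ is not flat. The constraint $\l_1+\l_2+\l_3+\l_4+\tfrac 1 2\l_5\geq 0$, combined with (at least) one negative and $(\dim N_1)(\dim N_2)$ zero eigenvalues, quickly forces $(\dim N_1)(\dim N_2)\leq 3$; after relabelling $\dim N_2=1$, so by simple connectedness $N_2=\R$ and $n\leq 4$. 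For $n=4$, the factor $N_1$ is a simply-connected $3$-manifold, and the nonnegative isotropic curvature condition on $N_1\times\R$ forces $N_1=\mathbb{S}^3$, yielding case (3). For $n\geq 5$, case (d) is impossible, and we land in (1), (2) or (4).
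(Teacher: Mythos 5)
The central algebraic step you propose---that $4\frac 1 2$-nonnegative curvature operator of the second kind implies $M\times\R$ has nonnegative isotropic curvature (the ``PIC1'' condition)---is not proven in your sketch, and it is almost certainly false or at least beyond reach. If the positive version held, then Brendle's convergence theorem \cite{Brendle08} would give diffeomorphism to a spherical space form in \emph{every} dimension $n\geq 4$, whereas Theorem~\ref{thm 4.5 PIC} claims only homeomorphism for $5\leq n\leq 11$ and resorts to Ricci flow with surgery (\cite{Hamilton97}, \cite{Brendle19}) for the diffeomorphism statement when $n=4$ or $n\geq 12$. The paper's Theorem~\ref{thm 4.5 PIC alg} proves only the weaker implication to nonnegative isotropic curvature on $M$ itself, not on $M\times\R$, and this distinction is precisely why the argument cannot be carried out as you describe: you cannot simply ``follow the template of part (2) of Theorem~\ref{thm 3+},'' because there three-nonnegativity delivers PIC1 and $4\frac 1 2$-nonnegativity does not.

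The paper compensates for the missing PIC1 by a different mechanism: Theorem~\ref{thm Ric} gives nonnegative Ricci curvature, which by Cheeger--Gromoll splits the universal cover as $N^{n-k}\times\R^k$ with $N$ compact of positive Ricci. For $n\geq 5$, a separate irreducibility theorem (\cite[Theorem 1.8]{Li21}) forces $k=0$, so one can apply the classification of simply connected, locally irreducible, closed manifolds with nonnegative isotropic curvature (\cite[Theorem 9.30]{Brendle10book}, \cite{Seshadri09}, \cite{MW93})---not the PIC1 classification. For $n=4$ the reducible case is handled by hand using \cite[Theorem 5.1]{Li21} to show the split must be $N^3\times\R$ rather than a product of two surfaces. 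Your case (d) eigenvalue-counting is plausible as a heuristic but you neither justify the existence of the strictly negative ``vertical'' eigenvalue (which in effect requires information about the scalar curvature of the factors, not just non-flatness) nor reduce $N_2$ to $\R$ rigorously; more importantly, the whole branch is moot because the classification you would be invoking is not available without PIC1. In short, the gap is not cosmetic: the proposal rests on an implication the paper deliberately avoids claiming, and the route that actually works replaces it with the Ricci-curvature/splitting/irreducibility argument.
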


The key ingredients in proving Theorem \ref{thm 4.5 PIC} and Theorem \ref{thm 4.5 weakly PIC} are the following two results.
\begin{theorem}\label{thm 4.5 PIC alg}
Let $R$ be an algebraic curvature operator on a Euclidean vector space $V$ of dimension $n\geq 4$. If $R$ has $4\frac 1 2$-positive (respectively, $4\frac 1 2$-nonnegative) curvature operator of the second kind. Then $R$ has positive (respectively, nonnegative) isotropic curvature.
\end{theorem}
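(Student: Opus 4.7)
My plan is to establish a sharp weighted eigenvalue bound and match it to a positive multiple of the isotropic curvature through an explicit algebraic identity. Fix an orthonormal four-frame $\{e_1, e_2, e_3, e_4\} \subset V$; the goal is to show that $R_{1313} + R_{1414} + R_{2323} + R_{2424} - 2R_{1234} > 0$ (respectively $\ge 0$) at this frame, under the $4\frac{1}{2}$-positive (respectively nonnegative) hypothesis.

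First I would prove the following elementary inequality: for any five orthonormal tensors $\psi_1, \ldots, \psi_5 \in S^2_0(V)$,
\begin{equation*}
\sum_{i=1}^{4} \mathring{R}(\psi_i, \psi_i) + \tfrac{1}{2}\mathring{R}(\psi_5, \psi_5) \;\ge\; \lambda_1 + \lambda_2 + \lambda_3 + \lambda_4 + \tfrac{1}{2}\lambda_5.
\end{equation*}
Let $B$ be the symmetric $5 \times 5$ matrix with entries $B_{ij} = \mathring{R}(\psi_i, \psi_j)$, and let $\mu_1 \le \cdots \le \mu_5$ denote its eigenvalues. The Courant--Fischer min-max principle gives $\mu_j \ge \lambda_j$ for every $j$, and since any diagonal entry of a symmetric matrix is bounded above by its largest eigenvalue, $B_{55} \le \mu_5$. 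Therefore
\begin{equation*}
\sum_{i=1}^{4} B_{ii} + \tfrac{1}{2} B_{55} \;=\; \mathrm{tr}(B) - \tfrac{1}{2} B_{55} \;\ge\; \sum_{j=1}^{5}\mu_j - \tfrac{1}{2}\mu_5 \;\ge\; \lambda_1+\lambda_2+\lambda_3+\lambda_4+\tfrac{1}{2}\lambda_5.
\end{equation*}

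Next I would produce five orthonormal tensors $\psi_1, \ldots, \psi_5 \in S^2_0(V)$, all supported on $\mathrm{span}(e_1, e_2, e_3, e_4)$, realizing an algebraic identity
\begin{equation*}
\sum_{i=1}^{4} \mathring{R}(\psi_i, \psi_i) + \tfrac{1}{2}\mathring{R}(\psi_5, \psi_5) \;=\; C\,\bigl(R_{1313} + R_{1414} + R_{2323} + R_{2424} - 2R_{1234}\bigr)
\end{equation*}
for an explicit positive constant $C$ and every algebraic curvature operator $R$. The natural candidates for $\psi_1, \ldots, \psi_4$ are the four paired off-diagonal tensors $\frac{1}{\sqrt 2}(e_1 \odot e_3 \pm e_2 \odot e_4)$ and $\frac{1}{\sqrt 2}(e_1 \odot e_4 \pm e_2 \odot e_3)$, which already enter the four-tensor identity underlying Theorem \ref{thm 4 PIC}. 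The fifth tensor $\psi_5$ is then chosen from the complementary five-dimensional subspace of $S^2_0(\mathrm{span}(e_1,\ldots,e_4))$, combining a multiple of $e_1 \odot e_2 \pm e_3 \odot e_4$ with a traceless diagonal piece such as a multiple of $e_1 \odot e_1 + e_2 \odot e_2 - e_3 \odot e_3 - e_4 \odot e_4$; the coefficients are calibrated so that the unwanted curvature components $R_{1212}$, $R_{3434}$, and the Bianchi companion $R_{1324}$ of $R_{1234}$ all cancel in the weighted sum.

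Combining the two displays yields $C\,\bigl(R_{1313} + R_{1414} + R_{2323} + R_{2424} - 2R_{1234}\bigr) \ge \lambda_1 + \lambda_2 + \lambda_3 + \lambda_4 + \tfrac{1}{2}\lambda_5$, which is strictly positive (respectively nonnegative) by the hypothesis, so the isotropic curvature at this frame is strictly positive (respectively nonnegative); since the four-frame was arbitrary, $R$ has positive (respectively nonnegative) isotropic curvature. The main obstacle is the algebraic identity: $\mathring{R}$ evaluated on symmetric products $e_a \odot e_b$ systematically produces the extraneous components $R_{1212}$, $R_{3434}$, and $R_{1324}$, and making all of these cancel simultaneously while retaining the correct coefficient on $R_{1234}$ is precisely what fixes the weight $\tfrac{1}{2}$ on $\psi_5$ and accounts for the sharpness of the threshold $4\frac{1}{2}$.
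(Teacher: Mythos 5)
Your weighted Courant--Fischer lemma is correct and is exactly the (implicit) mechanism behind each of the paper's six inequalities, so that part of your plan is sound. The genuine gap is the proposed algebraic identity: it cannot hold for the quintuple you describe, and the ``calibration'' you envision for $\psi_5$ is impossible for a structural reason.

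Take $\psi_1,\dots,\psi_4$ spanning the four paired off-diagonal directions (the paper's $\vp_4,\vp_5,\vp_6,\vp_7$, normalized) and $\psi_5$ in the orthogonal complement of these inside $S^2_0\bigl(\operatorname{span}(e_1,\dots,e_4)\bigr)$, which is spanned by the traceless diagonal tensors $\vp_1,\vp_2,\vp_3$ together with $\vp_8=e_1\odot e_2+e_3\odot e_4$ and $\vp_9=e_1\odot e_2-e_3\odot e_4$. From the paper's explicit computations, $\mathring R(\vp_i,\vp_i)$ carries $R_{1234}$ with coefficient $+4,-4,-4,+4$ for $i=4,5,6,7$ and with coefficient $0$ for $i=1,2,3,8,9$. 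Consequently
\begin{equation*}
\sum_{i=1}^4\mathring R(\psi_i,\psi_i)=R_{1313}+R_{1414}+R_{2323}+R_{2424},
\end{equation*}
with the $R_{1234}$ contributions canceling identically. As for $\psi_5$: its only nonzero off-diagonal matrix entries occupy positions $(1,2)$ and $(3,4)$, so the sole four-distinct-index contribution to $\mathring R(\psi_5,\psi_5)=\sum_{p,k,l,q}R_{pklq}(\psi_5)_{kl}(\psi_5)_{pq}$ is
\begin{equation*}
4\,(\psi_5)_{12}(\psi_5)_{34}\bigl(R_{1342}-R_{1423}\bigr),
\end{equation*}
and $R_{1342}-R_{1423}$ is linearly independent of $R_{1234}$ on the $2$-dimensional space of four-distinct-index components cut out by the Bianchi identity $R_{1234}+R_{1342}+R_{1423}=0$. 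Matching the coefficient of $R_{1234}$ on both sides of your proposed identity therefore forces $C=0$, which is absurd. No choice of the remaining coefficients of $\psi_5$ changes this, because the missing $R_{1234}$ term simply never occurs in $\mathring R$ evaluated on anything orthogonal to $\vp_4,\dots,\vp_7$. (The difficulty you flag at the end -- that the extraneous components $R_{1212}$, $R_{3434}$, $R_{1324}$ persist -- is a symptom of this, not a solvable calibration problem.)

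The paper circumvents the obstruction precisely by not seeking a single identity. It applies the $4\tfrac12$-nonnegativity hypothesis to six different quintuples drawn from $\vp_1,\dots,\vp_9$ (always keeping $\vp_1,\vp_5,\vp_6$ at full weight and cycling through the remaining six in pairs, one at weight $1$ and its partner at weight $\tfrac12$), and sums the six inequalities. The averaged left-hand side retains the $R_{1234}$ terms in the correct proportion and equals $27$ times the isotropic curvature. To repair your argument you would have to replace the single identity by a convex combination of several applications of your lemma; that convex combination is exactly the paper's six-fold averaging, and it is the step your proposal is missing.
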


\begin{theorem}\label{thm Ric}
Let $R$ be an algebraic curvature operator on a Euclidean vector space $V$ of dimension $n\geq 3$. If $R$ has $(n+\frac {n-2}{n})$-positive (respectively, $(n+\frac {n-2}{n})$-nonnegative) curvature operator of the second kind, then $R$ has positive (respectively, nonnegative) Ricci curvature.
\end{theorem}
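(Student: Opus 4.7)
The plan is to express $\Ric(\xi,\xi)$ at an arbitrary unit vector $\xi \in V$ as a weighted sum of pairings $\mathring{R}(\varphi,\varphi)$ against $n$ well-chosen orthonormal traceless symmetric $2$-tensors, plus a scalar-curvature term, and then to convert the scalar-curvature term into eigenvalue data via the universal identity $\tr_{S^2_0(V)}(\mathring{R}) = \tfrac{(n+2)\Scal}{2n}$. Ky Fan's minimum principle applied to the $n$ tensors, together with the trivial tail bound $\lambda_{n+1}+\cdots+\lambda_N \geq (N-n)\lambda_{n+1}$, will then deliver precisely the combination $\lambda_1+\cdots+\lambda_n+\tfrac{n-2}{n}\lambda_{n+1}$ on the right-hand side, so the threshold $n+\tfrac{n-2}{n}$ emerges from an arithmetic coincidence between $N-n = \tfrac{(n-2)(n+1)}{2}$ and the natural coefficients produced in the computation.

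Concretely, I would extend $\xi = e_1$ to an orthonormal basis $e_1,\ldots,e_n$ of $V$ and introduce the $n-1$ off-diagonal unit tensors $\varphi_j = \tfrac{1}{\sqrt{2}}(e_1 \otimes e_j + e_j \otimes e_1)$ for $2 \leq j \leq n$, together with the diagonal unit tensor
\[
    \varphi_0 = \frac{1}{\sqrt{n(n-1)}}\Big((n-1)\,e_1 \otimes e_1 - \sum_{j=2}^n e_j \otimes e_j\Big).
\]
These $n$ tensors are mutually orthonormal and lie in $S^2_0(V)$. A direct index computation from $\mathring{R}(e_i \odot e_j) = \sum_{k,l} R_{iklj}\, e_k \odot e_l$ yields
\[
    \sum_{j=2}^n \mathring{R}(\varphi_j, \varphi_j) = \Ric(\xi,\xi), \qquad \mathring{R}(\varphi_0, \varphi_0) = \frac{2\Ric(\xi,\xi)}{n-1} - \frac{\Scal}{n(n-1)},
\]
and clearing denominators in the sum of these two identities gives
\[
    (n+1)\Ric(\xi,\xi) = (n-1)\Big(\sum_{j=2}^n \mathring{R}(\varphi_j,\varphi_j) + \mathring{R}(\varphi_0,\varphi_0)\Big) + \frac{\Scal}{n}.
\]

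Invoking the universal trace identity $\tr_{S^2_0(V)}(\mathring{R}) = \tfrac{(n+2)\Scal}{2n}$ (itself a consequence of $\mathring{R}(g) = -\Ric$ for $g = \sum_i e_i \otimes e_i$, modulo sign convention), one obtains $\tfrac{\Scal}{n} = \tfrac{2}{n+2} \sum_{k=1}^N \lambda_k$. Combining this with Ky Fan's minimum principle applied to the orthonormal family $\{\varphi_0, \varphi_2, \ldots, \varphi_n\}\subset S^2_0(V)$, which yields a lower bound of $\lambda_1 + \cdots + \lambda_n$, and using $\lambda_{n+1} + \cdots + \lambda_N \geq (N-n)\lambda_{n+1}$ with $N-n = \tfrac{(n-2)(n+1)}{2}$, a straightforward rearrangement (factoring $\tfrac{n+1}{n+2}$) collapses everything to
\[
    \Ric(\xi,\xi) \geq \frac{n}{n+2}\Big(\lambda_1 + \cdots + \lambda_n + \frac{n-2}{n}\lambda_{n+1}\Big),
\]
from which the positive (respectively nonnegative) Ricci conclusion is immediate. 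The principal bookkeeping work lies in the explicit computation of $\mathring{R}(\varphi_0,\varphi_0)$ and in verifying the trace identity; both are routine but require careful tracking of the pair and Bianchi symmetries of $R_{iklj}$. Beyond these, the rest of the argument is a clean application of Ky Fan followed by coefficient matching.
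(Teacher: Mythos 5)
Your proposal is correct, and while it arrives at the same place as the paper, the route is organized differently in a way worth noting. The paper constructs a full orthonormal basis of $S^2_0(V)$ (adding the $\psi_{kl}$'s and $\xi_j$'s to your $\varphi_0,\varphi_2,\dots,\varphi_n$), applies the hypothesis $\lambda_1+\cdots+\lambda_n+\tfrac{n-2}{n}\lambda_{n+1}\geq 0$ to each orthonormal $(n+1)$-tuple of the form $\{\varphi_0,\varphi_2,\dots,\varphi_n,\psi_{kl}\}$ or $\{\varphi_0,\varphi_2,\dots,\varphi_n,\xi_j\}$ (implicitly via a mixed Ky Fan inequality), sums the $\tfrac{(n-2)(n+1)}{2}$ resulting inequalities, and computes every term explicitly. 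Your version replaces the explicit construction and summation over $\psi_{kl},\xi_j$ with the universal trace identity $\tr_{S^2_0(V)}\mathring{R}=\tfrac{(n+2)\Scal}{2n}$ together with the crude tail bound $\sum_{k>n}\lambda_k\geq(N-n)\lambda_{n+1}$, and invokes Ky Fan's minimum principle directly for the $n$ distinguished tensors. The arithmetic then produces the clean quantitative inequality $\Ric(\xi,\xi)\geq\tfrac{n}{n+2}\bigl(\lambda_1+\cdots+\lambda_n+\tfrac{n-2}{n}\lambda_{n+1}\bigr)$, which I verified: $(n-1)+\tfrac{2}{n+2}=\tfrac{n(n+1)}{n+2}$ and $\tfrac{2(N-n)}{n+2}=\tfrac{(n-2)(n+1)}{n+2}$ with $N-n=\tfrac{(n-2)(n+1)}{2}$, so dividing $(n+1)\Ric(\xi,\xi)\geq\tfrac{n(n+1)}{n+2}(\lambda_1+\cdots+\lambda_n)+\tfrac{(n-2)(n+1)}{n+2}\lambda_{n+1}$ by $n+1$ gives the claim. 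In effect, the paper's summed inequality is equivalent to yours (each of the paper's inequalities is bounded below by $\lambda_1+\cdots+\lambda_n+\tfrac{n-2}{n}\lambda_{n+1}$, and the sum produces the same coefficient $\tfrac{(n-2)(n+1)(n+2)}{2n}$ in front of $\Ric$), but your formulation makes the quantitative content explicit and replaces the bookkeeping over $\psi_{kl},\xi_j$ with two standard facts (the trace identity, which indeed follows from $\mathring{R}(g)=-\Ric$ and $\tr_{S^2(V)}\mathring{R}=\tfrac{\Scal}{2}$, and the monotonicity of the eigenvalue tail). One should note that $\mathring{R}$ need not preserve $S^2_0(V)$, so Ky Fan must be applied to the compression $\Pi\circ\mathring{R}|_{S^2_0(V)}$; this is consistent with how the $\lambda_k$'s are defined in the paper, so no gap arises, but it is worth saying explicitly.
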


In dimension four, the assumption in Theorem \ref{thm 4.5 PIC alg} cannot be weakened to $(4+\a)$-positive (respectively, $(4+\a)$-nonnegative) curvature operator of the second kind for any $\a > \frac 1 2$ in view of the fact that $\mathbb{CP}^2$ has $4\frac 1 2$-nonnegative (but not $4\frac 1 2$-positive) curvature operator of the second kind and nonnegative (but not positive) isotropic curvature. 
However, it is unclear in higher dimensions. 

The assumption in Theorem \ref{thm Ric} is sharp in all dimensions, as  $\mathbb{S}^{n-1} \times \mathbb{S}^1$ has $(n+\frac {n-2}{n} )$-nonnegative (but not $(n+\frac {n-2}{n})$-positive) curvature operator of the second kind (see \cite[Example 2.5]{Li21}).

In dimension three, Theorem \ref{thm Ric} says that $3\frac 1 3$-positive (respectively, $3\frac 1 3$-nonnegative) curvature operator of the second kind implies positive (respectively, nonnegative) Ricci curvature. Using Hamilton's classification of closed three-manifolds with positive or nonnegative Ricci curvature in \cite{Hamilton82, Hamilton86}, we obtain the following theorem for three-manifolds, which improves Theorem \ref{thm 3+}.

\begin{theorem}\label{thm 3.3 3D}
Let $(M^3,g)$ be a three-dimensional closed Riemannian manifold. If $M$ has $3\frac 1 3$-positive curvature operator of the second kind, then $M$ is diffeomorphic to $\mathbb{S}^3$ or its quotient. If $M$ has $3\frac 1 3$-nonnegative curvature operator of the second kind, then $M$ 
is diffeomorphic to a quotient of one of the spaces $\mathbb{S}^3$ or $\mathbb{S}^2 \times \R$ or $\mathbb{R}^3$ by a group
of fixed point free isometries in the standard metrics.
\end{theorem}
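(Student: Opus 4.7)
The plan is to reduce this to a direct combination of Theorem~\ref{thm Ric} specialized to dimension three together with Hamilton's classical classification of closed three-manifolds with positive or nonnegative Ricci curvature. The key arithmetic observation is that when $n=3$, the quantity $n+\frac{n-2}{n}$ equals $3+\frac{1}{3} = 3\frac{1}{3}$, so Theorem~\ref{thm Ric} applies exactly at the hypothesis appearing in the statement.

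For the positive case, I would argue: suppose $(M^3,g)$ has $3\frac{1}{3}$-positive curvature operator of the second kind. By Theorem~\ref{thm Ric} with $n=3$, this forces $\Ric > 0$ pointwise on $M$. Hamilton's theorem \cite{Hamilton82} then implies that $M$ is diffeomorphic to a quotient of $\mathbb{S}^3$ by a finite group of fixed-point-free isometries in the round metric, which is the desired conclusion.

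For the nonnegative case, the argument proceeds in parallel: $3\frac{1}{3}$-nonnegativity of $\mathring{R}$ combined with Theorem~\ref{thm Ric} yields $\Ric \geq 0$ on $M$, and Hamilton's classification for nonnegative Ricci in dimension three \cite{Hamilton86} gives precisely the three model spaces $\mathbb{S}^3$, $\mathbb{S}^2 \times \R$, and $\R^3$ as universal covers (quotiented by groups of fixed-point-free isometries in the standard metrics). There is essentially no obstacle once Theorem~\ref{thm Ric} is in hand; the entire content of this theorem is the recognition that the sharp pinching constant $n+\frac{n-2}{n}$ collapses to $3\frac{1}{3}$ in dimension three, matching the hypothesis exactly. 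The only subtlety worth stating explicitly is that Theorem~\ref{thm Ric} is a pointwise algebraic implication, so the curvature hypothesis transfers to Ricci curvature pointwise, which is what Hamilton's theorems require.
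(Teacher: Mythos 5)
Your proposal is correct and follows exactly the paper's argument: specialize Theorem~\ref{thm Ric} to $n=3$ (noting $n+\tfrac{n-2}{n}=3\tfrac13$) to obtain positive (resp.\ nonnegative) Ricci curvature, then invoke Hamilton's classification of closed three-manifolds with positive or nonnegative Ricci curvature from \cite{Hamilton82,Hamilton86}. No further comment is needed.
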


The referee kindly pointed out that, by appealing to the result of Liu \cite{Liu13} on the classification of complete noncompact three-manifolds with nonnegative Ricci curvature, we have the following theorem. 
\begin{theorem}\label{thm 3.3 Liu}
Let $(M^3,g)$ be a complete noncompact three-manifold with $3\frac 1 3$-nonnegative curvature operator of the second kind. Then either $M$ is diffeomorphic to $\R^3$ or the universal cover of $M$ is isometric to a Riemann product $N^2 \times \R$, where $N^2$ is a complete 2-manifold with nonnegative sectional curvature.
\end{theorem}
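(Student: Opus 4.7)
The plan is to reduce the theorem to a two-step combination of an algebraic pointwise result with a global structure theorem. First, I would apply Theorem \ref{thm Ric} in dimension $n=3$, where the threshold $n+\frac{n-2}{n}$ becomes exactly $3\frac{1}{3}$. At each point $p \in M$, the eigenvalues of $\mathring{R}_p$ satisfy $\lambda_1+\lambda_2+\lambda_3+\frac{1}{3}\lambda_4 \geq 0$ by hypothesis, so Theorem \ref{thm Ric} yields $\Ric_p \geq 0$. Doing this at every point, we conclude that $(M^3,g)$ is a complete noncompact Riemannian three-manifold with nonnegative Ricci curvature.

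The second step is to invoke the classification of Liu \cite{Liu13} pointed out by the referee: any complete noncompact three-manifold with $\Ric \geq 0$ is either diffeomorphic to $\R^3$, or its universal cover splits isometrically as a Riemannian product $N^2 \times \R$ with $N^2$ a complete two-manifold. Since the universal cover inherits $\Ric \geq 0$ and the $\R$-factor is Ricci-flat, the surface factor $N^2$ carries nonnegative Ricci curvature; in two dimensions this is equivalent to nonnegative sectional (equivalently, Gaussian) curvature, matching the stated conclusion.

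The main (and essentially only) obstacle has already been isolated as Theorem \ref{thm Ric}, a purely linear-algebraic statement about algebraic curvature operators to be proved independently. Once it is in hand, the present theorem is an immediate consequence of Liu's structure theorem. No curvature flow, strong-maximum-principle argument, or further rigidity analysis is needed; the write-up would mainly consist of verifying the numerical coincidence $n+\frac{n-2}{n}=3+\frac{1}{3}$ when $n=3$ and assembling the two cited results cleanly.
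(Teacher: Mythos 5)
Your proposal is correct and matches the paper's proof exactly: apply Theorem \ref{thm Ric} with $n=3$ (noting $n+\frac{n-2}{n}=3\frac13$) to get $\Ric\geq 0$, then invoke Liu's classification of complete noncompact three-manifolds with nonnegative Ricci curvature. The paper gives precisely this two-line argument.
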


It was shown by Micallef and Wang \cite{MW93} for $n=4$ and Brendle \cite{Brendle10} for $n\geq 5$ that Einstein manifolds with positive isotropic curvature have constant sectional curvature and closed Einstein manifolds with nonnegative isotropic curvature are locally symmetric. 
Thus Theorem \ref{thm 4.5 PIC alg} implies the following theorem for Einstein manifolds.
\begin{theorem}\label{thm Einstein}
Let $(M^n,g)$ be an Einstein manifold of dimension $n\geq 4$. If $M$ has $4\frac 1 2$-positive curvature operator of the second kind, then $M$ has constant sectional curvature. If $M$ is closed and has $4\frac 1 2$-nonnegative curvature operator of the second kind, then $M$ is locally symmetric. 
\end{theorem}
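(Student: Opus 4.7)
The plan is to deduce Theorem \ref{thm Einstein} as a direct corollary of Theorem \ref{thm 4.5 PIC alg} combined with the classical rigidity theorems for Einstein manifolds whose isotropic curvature has a sign. Since Theorem \ref{thm 4.5 PIC alg} is a purely algebraic statement about an algebraic curvature operator on a Euclidean vector space, I would apply it pointwise: at each $p \in M$, the curvature operator $R_p$ satisfies the $4\frac{1}{2}$-positivity (respectively, $4\frac{1}{2}$-nonnegativity) hypothesis on its second kind by assumption, so Theorem \ref{thm 4.5 PIC alg} gives that $R_p$ has positive (respectively, nonnegative) isotropic curvature. Consequently $(M,g)$ has positive (respectively, nonnegative) isotropic curvature as a Riemannian invariant.

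In the strict case, once I know that the Einstein manifold $(M,g)$ has positive isotropic curvature, I would invoke Micallef--Wang \cite{MW93} when $n=4$ and Brendle \cite{Brendle10} when $n\geq 5$, whose combined statement asserts that an Einstein manifold with positive isotropic curvature must have constant sectional curvature; this yields the first assertion. In the nonnegative, closed case, I would instead appeal to the companion rigidity result: a closed Einstein manifold with nonnegative isotropic curvature is locally symmetric, again due to Micallef--Wang in dimension four and Brendle in higher dimensions. The closedness hypothesis is genuinely used here to run the relevant strong maximum principle/Ricci flow argument, so it is retained in the statement.

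Because both ingredients are already in place---Theorem \ref{thm 4.5 PIC alg} carries all the novelty of this paper, and the Einstein rigidity results are quoted from the literature---there is no substantive obstacle in the proof of Theorem \ref{thm Einstein}. The only point to verify is that the isotropic curvature hypotheses of the cited Einstein rigidity theorems match exactly what Theorem \ref{thm 4.5 PIC alg} delivers, which is immediate. In this sense, Theorem \ref{thm Einstein} is an application of the paper's main algebraic input to the special class of Einstein metrics, where the Einstein condition transforms mere diffeomorphism/homeomorphism-type conclusions (as in Theorem \ref{thm 4.5 PIC}) into sharp geometric rigidity.
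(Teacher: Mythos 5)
Your proposal is correct and follows exactly the paper's own argument: the paper also deduces Theorem \ref{thm Einstein} by applying Theorem \ref{thm 4.5 PIC alg} pointwise to obtain positive (respectively nonnegative) isotropic curvature, and then invoking the Einstein rigidity theorems of Micallef--Wang \cite{MW93} for $n=4$ and Brendle \cite{Brendle10} for $n\geq 5$.
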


Finally, we propose the following conjecture.
\begin{conjecture}\label{conj n}
A closed $n$-dimensional Riemannian manifold with $(n+\frac{n-2}{n})$-positive curvature operator of the second kind is diffeomorphic to a spherical space form.
\end{conjecture}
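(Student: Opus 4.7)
The plan is to reduce Conjecture \ref{conj n} to Brendle's convergence theorem for the Ricci flow by strengthening the algebraic implication of Theorem \ref{thm 4.5 PIC alg}. Specifically, I would try to show that $(n+\tfrac{n-2}{n})$-positive curvature operator of the second kind forces $M$ to have PIC1, i.e., that $M\times\mathbb{R}$ has positive isotropic curvature. Once this is done, Brendle's convergence theorem \cite{Brendle08, Brendle10book} guarantees that the normalized Ricci flow starting at $g$ converges to a metric of constant positive sectional curvature, yielding the desired diffeomorphism with a spherical space form. Note that Theorem \ref{thm 3.3 3D} already handles $n=3$ and Theorem \ref{thm 4.5 PIC} handles $n=4$ (where $(n+\tfrac{n-2}{n})$-positive coincides with $4\tfrac{1}{2}$-positive), so the real content of the conjecture lies in dimensions $n\geq 5$, where the hypothesis is strictly weaker than $4\tfrac{1}{2}$-positivity.

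For the algebraic step I would follow the test-tensor method of \cite{CGT21} and \cite{Li21}. The PIC1 inequality at an orthonormal 4-frame $\{e_1,e_2,e_3,e_4\}\subset V$ takes the parameterized form
\[
R_{1313}+\lambda^2 R_{1414}+R_{2323}+\lambda^2 R_{2424}-2\lambda R_{1234}>0,\qquad \lambda\in[0,1],
\]
which interpolates between a Ricci-type inequality at $\lambda=0$ and the standard PIC inequality at $\lambda=1$. I would look for a family of test elements in $S^2_0(V)$, depending smoothly on $\lambda$, whose images under $\mathring{R}$ sum to the left-hand side of the above inequality. The $\lambda=1$ tensors should recover those used in the proof of Theorem \ref{thm 4.5 PIC alg}, while the $\lambda=0$ tensors should recover the Ricci-type construction used in Theorem \ref{thm Ric}. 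A natural first attempt is a $\lambda$-weighted superposition of the two families, arranged so that the total number of nontrivial test tensors is at most $n+1$ uniformly in $\lambda$, ensuring that the hypothesis $\lambda_1+\cdots+\lambda_n+\tfrac{n-2}{n}\lambda_{n+1}>0$ can be applied.

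The principal difficulty is that for $n\geq 5$ the bound $n+\tfrac{n-2}{n}$ exceeds $4\tfrac{1}{2}$, so Theorem \ref{thm 4.5 PIC alg} cannot be invoked at $\lambda=1$ and the test family must be genuinely more economical than the PIC construction of \cite{CGT21}. The sharp example $\mathbb{S}^{n-1}\times\mathbb{S}^1$ lies on the boundary of the hypothesis and fails PIC1 (since $\mathbb{S}^{n-1}\times\mathbb{R}^2$ is flat along a two-plane), which suggests that the extremal configurations are rigidly constrained and invites a Lagrange-multiplier or eigenvalue-rigidity analysis in the spirit of \cite{Li21} and \cite{NPW22} to identify the correct tensors. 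Should PIC1 prove unattainable in this way, a possible fallback is to verify directly that the cone of algebraic curvature operators cut out by $\lambda_1+\cdots+\lambda_n+\tfrac{n-2}{n}\lambda_{n+1}\geq 0$ is invariant under Hamilton's ODE; combined with an appropriate convergence criterion, Brendle's maximum-principle framework would then again yield the conclusion. I expect either route to be technically demanding, with the economical choice of test tensors, or equivalently the ODE-invariance of the extremal cone, being the main obstacle.
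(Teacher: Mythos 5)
This statement is labelled as a \emph{conjecture} in the paper, and the paper does not prove it: the author only verifies the cases $n=2$ (where $(n+\tfrac{n-2}{n})$-positivity reduces to positive scalar curvature, and one invokes uniformization), $n=3$ (Theorem~\ref{thm 3.3 3D}, via Theorem~\ref{thm Ric} and Hamilton's classification), and $n=4$ (Theorem~\ref{thm 4.5 PIC}, via Theorem~\ref{thm 4.5 PIC alg} plus Theorem~\ref{thm Ric} and the Hamilton--Chen--Zhu surgery theory). For $n\geq 5$ the conjecture is explicitly left open, so there is no proof in the paper against which to compare.

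Your proposal is not a proof but an outline of a strategy, and you yourself flag this. The strategy (show the hypothesis implies PIC1, then cite Brendle's convergence theorem) is a sensible one and is consistent with the philosophy of \cite{CGT21,Li21}, but the algebraic heart of the matter --- exhibiting a family of at most $n+1$ traceless test tensors whose $\mathring{R}$-averages reproduce the $\lambda$-interpolated isotropic curvature expression --- is precisely the open problem, and your text does not produce such a family or establish ODE-invariance of the cone $\lambda_1+\cdots+\lambda_n+\tfrac{n-2}{n}\lambda_{n+1}\geq 0$. Note also that the paper's own route in dimension four does \emph{not} go through PIC1: it combines PIC (Theorem~\ref{thm 4.5 PIC alg}) with positive Ricci (Theorem~\ref{thm Ric}) and then appeals to Hamilton's 4D surgery theory, so even the blueprint you would be generalizing is different from what the paper actually did in the verified case. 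Finally, a small correction: $\mathbb{S}^{n-1}\times\mathbb{S}^1$ does not ``fail PIC1'' outright --- its lift $\mathbb{S}^{n-1}\times\R^2$ has nonnegative isotropic curvature (indeed nonnegative curvature operator), it simply fails the strict inequality. This is consistent with the example lying on the boundary of the cone and with the conjecture being true; phrasing it as a failure of PIC1 would suggest a counterexample where there is none.

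In short: the statement is an open conjecture for $n\geq 5$, your write-up correctly situates the known low-dimensional cases and identifies the key missing algebraic lemma, but it leaves that lemma unproved, so it does not establish the conjecture.
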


When $n=2$, it is easy to see that two-positive curvature operator of the second is equivalent to positive scalar curvature. So Conjecture \ref{conj n} holds in dimension two by the uniformization theorem. 
What we prove in this paper is that Conjecture \ref{conj n} holds in three and four dimensions in Theorem \ref{thm 3.3 3D} and Theorem \ref{thm 4.5 PIC}, respectively. 
Note that the assumption cannot be weakened to $(n+\a)$-positive curvature operator of the second kind for any $\a > \frac{n-2}{n}$ as the product manifold $\mathbb{S}^{n-1}\times \mathbb{S}^1$ has $(n+\a)$-positive curvature operator of the second kind for any $\a > \frac{n-2}{n}$ (see \cite[Example 2.5]{Li21}). 

In a subsequent work \cite{Li22PAMS}, the author proves that a closed K\"ahler surface with six-positive curvature operator of the second kind is biholomorphic to $\mathbb{CP}^2$, and a closed nonflat K\"ahler surface with six-nonnegative curvature operator of the second kind is either biholomorphic to $\mathbb{CP}^2$ or isometric to $\mathbb{S}^2 \times \mathbb{S}^2$. This is done by showing that six-positive (respectively, six-nonnegative) curvature operator of the second kind implies positive (respectively, nonnegative) orthogonal bisectional curvature for K\"ahler manifolds.

The paper is organized as follows. 
In Section 2, we give an introduction to the curvature operator of the second kind.
In Section 3, we prove Theorem \ref{thm 4.5 PIC alg}. 
In Section 4, we prove Theorem \ref{thm Ric}. 
The proofs of Theorem \ref{thm 4.5 PIC} and Theorem \ref{thm 4.5 weakly PIC} are presented in Section 5.

\section{Curvature operator of the second kind}

The purpose of this section is to give an introduction to curvature operator of the second kind, as it is significantly less investigated than curvature operator (of the first kind) in the literature. 
Along the way, we also fix our notations, state our conventions and make some definitions. 
The reader is also referred to \cite{OT79, BK78, Nishikawa86, Kashiwada93, CGT21} for more information as well as previous results concerning the curvature operator of the second kind. 

Let $(V,g)$ be a (real) Euclidean vector space of dimension $n\geq 2$.  We always identify $V$ with its dual space $V^*$ via the metric $g$. 
The space of bilinear forms on $V$ is denote by $T^2(V)$, and it splits as 
\begin{equation*}
    T^2(V)=S^2(V)\oplus  \Lambda^2(V),
\end{equation*}
where $S^2(V)$ is the space of symmetric two-tensors on $V$ and $\Lambda^2(V)$ is the space of two-forms on $V$. 
Our conventions on symmetric products and wedge products are that, for $u$ and $v$ in $V$, $\odot$ denotes the symmetric product defined by 
\begin{equation*}
    u \odot v  =u \otimes v + v \otimes u,
\end{equation*}
and 
$\wedge$ denotes the wedge product defined by 
\begin{equation*}
    u \wedge v  =u \otimes v -v \otimes u.
\end{equation*}
The inner product $g$ on $V$ naturally induces inner products on $S^2(V)$ and $\Lambda^2(V)$. 
To be consistent with \cite{CGT21}, the inner product on $S^2(V)$ is defined as 
\begin{equation*}
    \langle A, B \rangle =\tr(A^T B),
\end{equation*}
and the inner product on $\Lambda^2(V)$ is defined as 
\begin{equation*}
    \langle A, B \rangle =\frac{1}{2}\tr(A^T B).
\end{equation*}
In particular, if $\{e_1, \cdots, e_n\}$ is an orthonormal basis for $V$, then 
$\{e_i \wedge e_j\}_{1\leq i < j \leq n}$ is an orthonormal basis for $\Lambda^2(V)$ and $\{\frac{1}{\sqrt{2}}e_i \odot e_j\}_{1\leq i < j \leq n} \cup \{\frac{1}{2} e_i \odot e_i\}_{1\leq i\leq n}$ is an orthonormal basis for $S^2(V)$.

The space of symmetric two-tensors on $\Lambda^2(V)$ has the orthogonal decomposition 
\begin{equation*}
    S^2(\Lambda^2 (V))=S^2_B(\Lambda^2 (V)) \oplus \Lambda^4 (V),
\end{equation*}
where $S^2_B(\Lambda^2 (V))$ consists of all tensors $R\in S^2(\Lambda^2 (V))$ that also satisfy the first Bianchi identity. 
Any $R\in S^2_B(\Lambda^2 (V))$ is called an algebraic curvature operator. 

By the symmetries of $R\in  S^2_B(\Lambda^2 (V))$ (not including the first Bianchi identity), there are (up to sign) two ways that $R$ can induce a symmetric linear map $R:T^2(V) \to T^2(V)$. 
The first one, denoted by $\hat{R}: \Lambda^2(V) \to \Lambda^2(V)$ in this paper, is the so-called curvature operator defined by
\begin{equation}\label{eq R hat}
    \hat{R}(e_i\wedge e_j) =\frac 1 2 \sum_{k,l}R_{ijkl}e_k \wedge e_l
\end{equation}
where $\{e_1, \cdots, e_n\}$ is an orthonormal basis of $V$. 
Note that if the eigenvalues of $\hat{R}$ are all greater than or equal to $\kappa \in \R$, then all the sectional curvatures of $R$ are bounded from below by $\kappa$.  

The second one, denoted by $\mathring{R}:S^2(V) \to S^2(V)$, is defined by 
\begin{equation}\label{eq R ring}
    \mathring{R}(e_i \odot e_j) =\sum_{k,l}R_{iklj} e_k \odot e_l.
\end{equation}

However, on contrary to the case of $\hat{R}$, all eigenvalues of $\mathring{R}$ being nonnegative implies all the sectional curvatures of $R$ are zero, that it, $R\equiv 0$. 
The new feature here is that $S^2(V)$ is not irreducible under the action of the orthogonal group $O(V)$ of $V$. 
The space $S^2(V)$ splits into $O(V)$-irreducible subspaces as 
\begin{equation*}
    S^2(V)=S^2_0(V) \oplus \R g,
\end{equation*}
where $S^2_0(V)$ denotes the space of traceless symmetric two-tensors on $V$.
The map $\mathring{R}$ defined in \eqref{eq R ring} then induces a bilinear form
\begin{equation}\label{eq 2.3}
    \mathring{R}:S^2_0(V) \times S^2_0(V) \to \R
\end{equation}
by restriction to $S^2_0(V)$. 
Note that if all the eigenvalues of $\mathring{R}$ restricted to $S^2_0(V)$ are bounded from by $\kappa \in \R$, then the sectional curvatures of $R$ are bounded from below by $\kappa$. 
It should be noted that $\mathring{R}$ does not preserve the subspace $S^2_0(V)$ in general, but it does, for instance, when $R$ has constant Ricci curvature. 

Following \cite{Nishikawa86}, we call $\mathring{R}$ in \eqref{eq 2.3} (restricted to $S^2_0(V)$) the \textit{curvature operator of the second kind}, to distinguish it from the map $\hat{R}$ defined in \eqref{eq R hat}, which he called the \textit{curvature operator of the first kind}.

We make the following definitions. 
\begin{definition}\label{def curvature operator of the second kind}
Let $R\in S^2_B(\Lambda^2(V))$ be an algebraic curvature operator. Denote by $\l_1 \leq \cdots \leq \l_N$ the eigenvalues of $\mathring{R}$ (restricted to $S^2_0(V)$), where $N=\dim(S^2_0(V))=\frac{(n-1)(n+2)}{2}$. 
For $1\leq k \leq N$ and $0 \leq \a \leq 1$ satisfying $k+\a \leq N$, we say 
$R$ has $(k+\a)$-nonnegative curvature operator of the second kind if 
\begin{equation*}
    \l_1 + \cdots +\l_k + \a \l_{k+1} \geq 0. 
\end{equation*}
If the above inequality is strict, $R$ is said is to have $(k+\a)$-positive curvature operator of the second kind. 
\end{definition}

\begin{definition}\label{def curvature operator of the second kind on manifolds}
Let $(M^n,g)$ be a Riemannian manifold of dimension $n$. 
For $1\leq k \leq N$ and $0 \leq \a \leq 1$ satisfying $k+\a \leq N$, we say
$M$ has $(k+\a)$-positive (respectively, $(k+\a)$-nonnegative) curvature operator of the second kind if the Riemannian curvature tensor at each point $p\in M$ has $(k+\a)$-positive (respectively, $(k+\a)$-nonnegative) curvature of the second kind.
\end{definition}

\section{Positive Isotropic Curvature}

The notion of positive isotropic curvature was introduced by Micallef and Moore \cite{MM88} in their study of minimal two-spheres in Riemannian manifolds. They proved that a simply-connected closed Riemannian manifold with positive isotropic curvature is homeomorphic to the sphere. 
We recall the following definition for the reader's convenience. 

\begin{definition}
Let $R \in S^2_B (\Lambda^2 V)$ be an algebraic curvature operator on a Euclidean vector space $V$ of dimension $n\geq 4$. 
We say $R$ has nonnegative isotropic curvature if for all orthonormal four-frames $\{e_1, e_2, e_3, e_4\}\subset V$, it holds that 
    \begin{equation*}
        R_{1313}+R_{1414}+R_{2323}+R_{2424}-2R_{1234} \geq 0.
    \end{equation*}
If the inequality is strict, $R$ is said to have positive isotropic curvature. 
\end{definition}

In this section, we prove Theorem \ref{thm 4.5 PIC alg}, which is restated below. 
\begin{theorem}
Suppose $n=\dim(V)\geq 4$ and $R\in S^2_B(\Lambda^2(V))$ has $4\frac 1 2$-positive (respectively, $4\frac 1 2$-nonnegative) curvature operator of the second kind. Then for any orthonormal four-frame $\{e_1, \cdots, e_4\} \subset V$, we have
\begin{equation}\label{eq PIC}
     R_{1313}+ R_{1414}+ R_{2323}+  R_{2424} - 2 R_{1234} > (\text{ respectively, } \geq   ) \ 0. 
\end{equation}
\end{theorem}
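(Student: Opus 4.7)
\medskip

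The plan is to find, for every orthonormal four-frame $\{e_1, e_2, e_3, e_4\} \subset V$, five orthonormal traceless symmetric two-tensors $\varphi_1, \varphi_2, \varphi_3, \varphi_4, \varphi_5 \in S^2_0(V)$ and a positive constant $c$ (independent of $R$) for which
\begin{equation*}
    \sum_{i=1}^{4}\mathring{R}(\varphi_i, \varphi_i) + \tfrac{1}{2}\mathring{R}(\varphi_5, \varphi_5) \;=\; c\,\bigl( R_{1313} + R_{1414} + R_{2323} + R_{2424} - 2R_{1234}\bigr).
\end{equation*}
Granting this identity, the theorem follows at once from a weighted Ky Fan inequality: Abel summation against the nonincreasing weights $(1,1,1,1,\tfrac{1}{2})$, together with the variational characterization $\sum_{j=1}^{i}\mathring{R}(\varphi_j, \varphi_j) \geq \lambda_1+\cdots+\lambda_i$ valid for any orthonormal family $\{\varphi_j\}$, shows the left hand side is at least $\lambda_1 + \lambda_2 + \lambda_3 + \lambda_4 + \tfrac{1}{2}\lambda_5$. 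This quantity is positive (respectively, nonnegative) by the $4\tfrac{1}{2}$-positivity (nonnegativity) hypothesis, so dividing through by $c>0$ yields the PIC inequality.

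The natural building blocks for the $\varphi_i$ are the six off-diagonal tensors $\tfrac{1}{\sqrt 2}\, e_i \odot e_j$ with $1 \leq i<j\leq 4$ together with the traceless diagonal tensors built from $e_i\odot e_i - e_j\odot e_j$. Using the formula $\mathring{R}(e_i \odot e_j, e_p \odot e_q) = 2(R_{ipqj}+R_{iqpj})$ and the first Bianchi identity $R_{1234}+R_{1342}+R_{1423}=0$, every $\mathring{R}(\varphi,\varphi)$ becomes an explicit linear combination of $R_{ijji}$'s, $R_{1234}$ and $R_{1342}$. As a starting point, the four orthonormal tensors
\begin{equation*}
    \tfrac{1}{2}(e_1 \odot e_3 + e_2 \odot e_4),\ \tfrac{1}{2}(e_1 \odot e_4 - e_2 \odot e_3),\ \tfrac{1}{2}(e_1 \odot e_3 - e_2 \odot e_4),\ \tfrac{1}{2}(e_1 \odot e_4 + e_2 \odot e_3)
\end{equation*}
contribute exactly $R_{1313}+R_{1414}+R_{2323}+R_{2424}$, with the $R_{1234}$ and $R_{1342}$ cross terms cancelling pairwise. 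The fifth tensor, orthogonal to these four, must supply the missing $-2R_{1234}$ after the $\tfrac12$-weight. The cross identity $\mathring{R}(e_1 \odot e_2, e_3 \odot e_4) = 4R_{1342}+2R_{1234}$ shows that $\varphi_5$ should be a calibrated linear combination of $e_1 \odot e_2$, $e_3 \odot e_4$ and a traceless diagonal correction.

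The main technical obstacle is the cancellation of spurious terms: the naive candidate $\tfrac{1}{2}(e_1 \odot e_2 - e_3 \odot e_4)$ introduces $R_{1212}$, $R_{3434}$, and an unwanted $R_{1342}$ contribution, none of which appear in the PIC expression. Either the first four tensors must be slightly perturbed, or $\varphi_5$ must incorporate a traceless diagonal correction, so that these spurious contributions cancel exactly. This reduces to a small linear system in the coefficients of the above family of tensors, whose solvability is ensured by a dimension count in $S^2_0(V)$ combined with the Bianchi identity. The nonnegative case is obtained from the same identity by replacing strict with weak inequalities throughout, and the hypothesis $n \geq 4$ is needed only to have an orthonormal four-frame in $V$.
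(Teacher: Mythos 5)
Your reduction to a weighted Ky Fan inequality is sound: if one can find five orthonormal $\varphi_1,\dots,\varphi_5 \in S^2_0(V)$ with $\sum_{i=1}^{4}\mathring{R}(\varphi_i,\varphi_i)+\tfrac12\mathring{R}(\varphi_5,\varphi_5) = c\cdot\mathrm{PIC}(R)$ identically in $R$, then Abel summation against the nonincreasing weights $(1,1,1,1,\tfrac12)$ together with the Ky Fan partial-sum inequalities gives exactly what you want. The first four tensors you list are correct and do yield $R_{1313}+R_{1414}+R_{2323}+R_{2424}$, as you computed. The gap is in the step you flag as ``the main technical obstacle'' and then dismiss with a dimension count: no admissible $\varphi_5$ exists. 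The tensor $\varphi_5$ must be orthogonal to your first four, i.e.\ it lies in $\spann\{e_i\odot e_i\}\oplus\spann\{e_1\odot e_2,\, e_3\odot e_4\}\oplus(\text{terms involving }e_5,\dots,e_n)$. Within this space, the only source of an $R_{1234}$ contribution in $\mathring{R}(\varphi_5,\varphi_5)$ is the cross term $\mathring{R}(e_1\odot e_2,\,e_3\odot e_4)=4R_{1342}+2R_{1234}$, in which $R_{1342}$ and $R_{1234}$ always occur in the fixed ratio $2{:}1$; neither the pure diagonal tensors (which give only $R_{ijji}$ terms), nor diagonal--off-diagonal cross terms (which give spurious $R_{3123}$, $R_{4124}$, $R_{1341}$, $R_{2342}$ terms), nor tensors involving $e_5,\dots,e_n$ (which introduce $R_{i5j5}$-type terms) can supply a compensating $R_{1342}$. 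Hence one cannot arrange $\tfrac12\mathring{R}(\varphi_5,\varphi_5)=-2R_{1234}$, and the constraint $c=1$ forced by your first four leaves no slack. Perturbing the first four tensors only enlarges the set of spurious components to be cancelled; a dimension count does not certify solvability of this nonlinear system (orthonormality is a quadratic constraint), and the computation above shows the obstruction is real rather than generic.

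The paper sidesteps this by \emph{not} seeking a single five-tuple. It constructs nine tensors $\vp_1,\dots,\vp_9$ spanning $S^2_0(\spann\{e_1,\dots,e_4\})$, applies the $4\tfrac12$-positivity hypothesis to \emph{six different} five-tuples drawn from this family (each of the form $\{\vp_1,\vp_5,\vp_6,\vp_a\}$ with half-weight on $\vp_b$), and sums the resulting six inequalities. The sum has enough symmetry that the $R_{1212}+R_{3434}$, $R_{1342}$, and $R_{1423}$ contributions cancel across the six terms, leaving a positive multiple of the PIC expression. In effect the paper realizes the PIC functional as $\mathrm{tr}(\mathring{R}A)$ for an $A$ of rank nine with spectrum proportional to $\{6,6,6,\tfrac32,\dots,\tfrac32\}$, decomposed as a positive combination of rank-five operators each with spectrum $\{1,1,1,1,\tfrac12\}$---rather than as a single such operator, which is what your argument would require and which does not exist. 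To repair your proof you would need to adopt this multi-inequality averaging; the single-quintuple identity you posit is false.
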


\begin{proof}
Let $\{e_1, e_2,e_3,e_4\}$ be an orthonormal four-frame in $V$. We define the following symmetric two-tensors on $V$:
\begin{eqnarray*}
\vp_1 &=& \frac{1}{2} (e_1\odot e_1+e_2\odot e_2 -e_3\odot e_3 -e_4 \odot e_4),  \\
\vp_2 &=& \frac{1}{2} (e_1\odot e_1-e_2\odot e_2 +e_3\odot e_3 -e_4 \odot e_4), \\
\vp_3 &=& \frac{1}{2} (e_1\odot e_1-e_2\odot e_2 -e_3\odot e_3 +e_4 \odot e_4), \\
\vp_4 &=& e_1 \odot e_4 +e_2\odot e_3, \\
\vp_5 &=& e_1 \odot e_4 -e_2\odot e_3, \\
\vp_6 &=& e_1 \odot e_3 +e_2\odot e_4, \\
\vp_7 &=& e_1 \odot e_3 -e_2\odot e_4, \\
\vp_8 &=& e_1 \odot e_2 +e_3\odot e_4, \\
\vp_9 &=& e_1 \odot e_2 -e_3\odot e_4. 
\end{eqnarray*}
It is easy to verify that these tensors are traceless, mutually orthogonal in $S^2_0(V)$ and of the same magnitude $2$. 

If $R$ has $4\frac 1 2$-nonnegative curvature operator of the second kind, we get
\begin{eqnarray*}
 && \mathring{R}(\vp_1,\vp_1)+\mathring{R}(\vp_5,\vp_5)+\mathring{R}(\vp_6,\vp_6)+\mathring{R}(\vp_2,\vp_2)+\frac{1}{2}\mathring{R}(\vp_3,\vp_3) \geq 0, \\
 && \mathring{R}(\vp_1,\vp_1)+\mathring{R}(\vp_5,\vp_5)+\mathring{R}(\vp_6,\vp_6)+\mathring{R}(\vp_3,\vp_3)+\frac{1}{2}\mathring{R}(\vp_2,\vp_2) \geq 0, \\
 && \mathring{R}(\vp_1,\vp_1)+\mathring{R}(\vp_5,\vp_5)+\mathring{R}(\vp_6,\vp_6)+\mathring{R}(\vp_4,\vp_4)+\frac{1}{2}\mathring{R}(\vp_7,\vp_7) \geq 0, \\
&& \mathring{R}(\vp_1,\vp_1)+\mathring{R}(\vp_5,\vp_5)+\mathring{R}(\vp_6,\vp_6)+\mathring{R}(\vp_7,\vp_7)+\frac{1}{2}\mathring{R}(\vp_4,\vp_4) \geq 0, \\
&& \mathring{R}(\vp_1,\vp_1)+\mathring{R}(\vp_5,\vp_5)+\mathring{R}(\vp_6,\vp_6)+\mathring{R}(\vp_8,\vp_8)+\frac{1}{2}\mathring{R}(\vp_9,\vp_9) \geq 0, \\
&& \mathring{R}(\vp_1,\vp_1)+\mathring{R}(\vp_5,\vp_5)+\mathring{R}(\vp_6,\vp_6)+\mathring{R}(\vp_9,\vp_9)+\frac{1}{2}\mathring{R}(\vp_8,\vp_8) \geq 0. 
\end{eqnarray*}
Adding the above six inequalities together yields
\begin{eqnarray}\label{eq PIC 6}
\nonumber && 6\left(\mathring{R}(\vp_1,\vp_1)+\mathring{R}(\vp_5,\vp_5)+\mathring{R}(\vp_6,\vp_6)\right) +\frac{3}{2}\left( \mathring{R}(\vp_2,\vp_2)+\mathring{R}(\vp_3,\vp_3)\right) \\
 && +\frac{3}{2}\left(\mathring{R}(\vp_4,\vp_4)+\mathring{R}(\vp_7,\vp_7) \right)
 +\frac{3}{2}\left(\mathring{R}(\vp_8,\vp_8)+\mathring{R}(\vp_9,\vp_9)\right) \geq 0 . 
\end{eqnarray}
On the other hand, direct calculation shows
\begin{eqnarray*}
\mathring{R}(\vp_1,\vp_1) &=& 2(-R_{1212}-R_{3434}+R_{1313}+R_{2424}+R_{1414}+R_{2323}),\\
\mathring{R}(\vp_2,\vp_2) &=&2(-R_{1313}-R_{2424}+R_{1212}+R_{3434}+R_{1414}+R_{2323}), \\
\mathring{R}(\vp_3,\vp_3) &=& 2(-R_{1414}-R_{2323}+R_{1212}+R_{3434}+R_{1313}+R_{2424}) ,
\end{eqnarray*}
and
\begin{eqnarray*}
\mathring{R}(\vp_4,\vp_4) &=& 2(R_{1414}+R_{2323}+2R_{1234}-2R_{1342} ), \\
\mathring{R}(\vp_5,\vp_5) &=& 2(R_{1414}+R_{2323}-2R_{1234}+2R_{1342} ), \\
\mathring{R}(\vp_6,\vp_6) &=& 2(R_{1313}+R_{2424}-2R_{1234}+2R_{1423} ), \\
\mathring{R}(\vp_7,\vp_7) &=& 2(R_{1313}+R_{2424}+2R_{1234}-2R_{1423}), \\
\mathring{R}(\vp_8,\vp_8) &=& 2(R_{1212}+R_{3434}+2R_{1423}-2R_{1342}),  \\
\mathring{R}(\vp_9,\vp_9) &=& 2(R_{1212}+R_{3434}-2R_{1423}+2R_{1342} ). 
\end{eqnarray*}
Therefore, we obtain
\begin{eqnarray*}
 && \mathring{R}(\vp_1,\vp_1)+\mathring{R}(\vp_5,\vp_5)+\mathring{R}(\vp_6,\vp_6) \\
 &=& 4(R_{1313}+R_{1414}+R_{2323}+R_{2424} )-2(R_{1212}+R_{3434})-12R_{1234} ,
\end{eqnarray*}
and 
\begin{eqnarray*}
 \mathring{R}(\vp_2,\vp_2)+\mathring{R}(\vp_3,\vp_3) & =& 4(R_{1212}+R_{3434}), \\
 \mathring{R}(\vp_8,\vp_8)+\mathring{R}(\vp_9,\vp_9) & =& 4(R_{1212}+R_{3434}), \\
 \mathring{R}(\vp_4,\vp_4)+\mathring{R}(\vp_7,\vp_7) & =& 2(R_{1313}+R_{1414}+R_{2323}+R_{2424})+12R_{1234}.
\end{eqnarray*}
Substituting the above four identities into \eqref{eq PIC 6} produces
\begin{equation*}
    27(R_{1313}+R_{1414}+R_{2323}+R_{2424})-54R_{1234} \geq 0.
\end{equation*}
Hence $R$ has nonnegative isotropic curvature. Similarly, if $R$ has $4\frac 1 2$-positive curvature operator of the second kind, then all the above inequalities become strict and we conclude that $R$ has positive isotropic curvature. 

\end{proof}

\begin{remark}
On the complex projective space $\mathbb{CP}^2$ with its Fubini-Study metric satisfying $\Ric=6g$, all the inequalities in the above proof become identities for a suitably chosen orthonormal frame. 
More precisely, if we choose the orthonormal frame $\{e_1,e_2, e_3, e_4\}$ such that the two-plane spanned by $e_1$ and $e_2$ maximizes the sectional curvature among all two-planes, then all the nonzero components of curvature tensors are given by 
\begin{eqnarray*}
&& R_{1212}=R_{3434}=4,\\
&& R_{1313}=R_{2424}=1,\\
&& R_{1414}=R_{2323}=1, \\
&& R_{1234}=R_{1423}=1, \\
&& R_{1342}=-1,
\end{eqnarray*}
and their variants by symmetry. 
Therefore, we have that $\mathring{R}(\vp_i,\vp_i)=-8$ for $i=1, 5, 6$ and $\mathring{R}(\vp_i,\vp_i)=16$ for $i=2, 3, 4, 7, 8, 9$. 
\end{remark}

We are ready to prove Theorem \ref{thm Einstein}. 
\begin{proof}[Proof of Theorem \ref{thm Einstein}]
As explained in the Introduction, it follows from Theorem \ref{thm 4.5 PIC alg} and the result of Micallef and Wang \cite{MW93} when $n=4$ and that of Brendle \cite{Brendle10} when $n\geq 5$. 
\end{proof}

\section{Positive Ricci Curvature}
In this section, we prove Theorem \ref{thm Ric}. 

\begin{proof}[Proof of Theorem \ref{thm Ric}]
Let $\{e_1,\cdots, e_n\}$ be an orthonormal basis of $V$. We define the following symmetric two-tensors on $V$:
\begin{eqnarray*}
\vp_1 &=& \frac{1}{2\sqrt{n(n-1)}} \left( (n-1) e_1\odot e_1-\sum_{p=2}^n e_p\odot e_p \right),  \\
\vp_i &=& \frac{1}{\sqrt{2}} e_1\odot e_i \text{ for } 2 \leq i \leq n, \\
\psi_{kl} &=& \frac{1}{\sqrt{2}}e_k\odot e_l \text{ for } 2 \leq k < l \leq n,  \\
\xi_j &=& \frac{1}{2\sqrt{j(j-1)}} \left(\sum_{p=2}^j e_p \odot e_p - (j-1)e_{j+1}\odot e_{j+1} \right), \text{ for } 2 \leq j \leq n-1 . 
\end{eqnarray*}
One easily verifies that these tensors form an orthonormal basis of $S^2_0(V)$. 

If $R$ has $(n+\frac{n-2}{n})$-nonnegative curvature operator of the second kind, then we get that 
for $2 \leq k < l \leq n$, 
\begin{eqnarray*}
 && \sum_{i=1}^n \mathring{R}(\vp_i,\vp_i)+\frac {n-2}{n} \mathring{R}(\psi_{kl},\psi_{kl}) \geq 0, 
\end{eqnarray*}
and for $2 \leq j \leq n$, 
\begin{eqnarray*}
&& \sum_{i=1}^n \mathring{R}(\vp_i,\vp_i)+\frac {n-2}{n} \mathring{R}(\xi_{j},\xi_{j}) \geq 0. 
\end{eqnarray*}
Adding the above $\frac{(n-2)(n+1)}{2}$-many inequalities together yields
\begin{eqnarray}\label{eq Ric}
\nonumber && \frac{(n-2)(n+1)}{2}  \sum_{i=1}^n \mathring{R}(\vp_i,\vp_i) +\frac {n-2}{n} \sum_{2\leq k< l \leq n} \mathring{R}(\psi_{kl},\psi_{kl}) \\
&&+  \frac {n-2}{n} \sum_{j=2}^{n-1} \mathring{R}(\xi_j,\xi_j) \geq 0.  
\end{eqnarray}
On the other hand, direct calculation 
using 
$(\vp_1)_{11}=\frac{\sqrt{n-1}}{\sqrt{n}}$ and
$(\vp_1)_{jj} = \frac{-1}{\sqrt{n(n-1)}}$ for  $2\leq j \leq n$
shows that
\begin{eqnarray}\label{eq 1}
\nonumber \mathring{R}(\vp_1,\vp_1) &=& \sum_{i,j=1}^n R_{ijji}(\vp_1)_{ii}(\vp_1)_{jj} \\ \nonumber
    &=& \frac{2}{n}\sum_{j=2}^n R_{1j1j} -\frac{1}{n(n-1)}\sum_{i,j=2}^{n} R_{ijij} \\ \nonumber
    &=& \frac{2}{n}R_{11}-\frac{1}{n(n-1)}\left(S-2R_{11} \right)   \\
&=& \frac{2}{n-1} R_{11}-\frac{1}{n(n-1)}S,
\end{eqnarray}
where $R_{11}=\Ric(e_1,e_1)$ and $S$ denotes the scalar curvature.
We also calculate that 
\begin{eqnarray}\label{eq 2}
\sum_{i=2}^n \mathring{R}(\vp_i,\vp_i) = \sum_{i=2}^n R_{1i1i} = R_{11}
\end{eqnarray}
and 
\begin{eqnarray}\label{eq 3}
\sum_{2\leq k < l \leq n} \mathring{R}(\psi_{kl},\psi_{kl}) =\sum_{2\leq k < l \leq n} R_{klkl} =\frac{S}{2} - R_{11}.
\end{eqnarray}
Finally, we compute that
\begin{eqnarray}\label{eq 4}
\nonumber \sum_{j=2}^{n-1} \mathring{R}(\xi_j,\xi_j) 
\nonumber &=& \sum_{j=2}^{n-1} \sum_{p,q=1}^n R_{pqqp}(\xi_j)_{pp}(\xi_j)_{qq} \\
\nonumber &=& \sum_{j=2}^{n-1}\left(  \frac 2 j \sum_{p=2}^j  R_{j+1,p,j+1,p} -\frac{1}{j(j-1)} \sum_{p, q=2}^j R_{pqpq}\right) \\
\nonumber &=& \frac{2}{n-1} \sum_{2 \leq p<q \leq n} R_{pqpq} \\
&=& \frac{1}{n-1} (S-2R_{11}).
\end{eqnarray}
Plugging the above four identities \eqref{eq 1}, \eqref{eq 2}, \eqref{eq 3} and \eqref{eq 4} into \eqref{eq Ric}, we obtain that
\begin{eqnarray*}
0&\leq & \frac{(n-2)(n+1)}{2} \left(\frac{2}{n-1} R_{11}-\frac{1}{n(n-1)}S  +R_{11}\right)  \\
&&+\frac {n-2}{n} \left(\frac{S}{2}-R_{11} \right)
+  \frac {n-2}{n(n-1)} (S-2R_{11})  \\
&=& \frac{(n-2)(n+1)(n+2)}{2n}  R_{11}.
\end{eqnarray*}
Hence $R$ has nonnegative Ricci curvature. 
Similarly, if $R$ has $(n+\frac{n-2}{n})$-positive curvature operator of the second kind, then all the above inequalities become strict and we get that $R$ has positive Ricci curvature.
\end{proof}
\begin{remark}
On the product manifold $\mathbb{S}^{n-1} \times \mathbb{S}^1$, all the inequalities in the above proof become identities if the orthonormal frame is chosen such that $e_1 \in T_p \mathbb{S}^1$ and $e_2, \cdots, e_n \in T_q\mathbb{S}^{n-1}$. 
\end{remark}

We now give the proofs of Theorem \ref{thm 3.3 3D} and Theorem \ref{thm 3.3 Liu}.
\begin{proof}[Proof of Theorem \ref{thm 3.3 3D}]
As explained in the Introduction, it follows from Theorem \ref{thm Ric} and Hamilton's classification of closed three-manifolds with positive or nonnegative Ricci curvature in \cite{Hamilton82, Hamilton86}.
\end{proof}

\begin{proof}[Proof of Theorem \ref{thm 3.3 Liu}]
It follows immediately by combining Theorem \ref{thm Ric} and Liu's classification \cite{Liu13} of complete noncompact three-manifolds with nonnegative Ricci curvature.
\end{proof}

\section{Proofs}

We present the proofs of Theorem \ref{thm 4.5 PIC} and Theorem \ref{thm 4.5 weakly PIC} in this section. 

\begin{proof}[Proof of Theorem \ref{thm 4.5 PIC}]
By Theorem \ref{thm Ric}, $M$ has positive Ricci curvature. 
Thus the fundamental group of $M$ is finite and 
$\widetilde{M}$, the universal cover of $M$, is also a closed manifold. 
Since $\widetilde{M}$ also has $4\frac 1 2$-positive curvature operator of the second kind, it has positive isotropic curvature by Theorem \ref{thm 4.5 PIC alg}.  
By the work of Micallef and Moore \cite{MM88}, $\widetilde{M}$ is homeomorphic to $\mathbb{S}^n$. 
 
On the other hand, since $\widetilde{M}$ has trivial fundamental group, it does not contain nontrivial incompressible $(n-1)$-dimensional space forms.  
Therefore, we can use the work of Hamilton \cite{Hamilton97} if $n=4$ and that of Brendle \cite{Brendle19} if $n\geq 12$ to conclude that $\widetilde{M}$ must be diffeomorphic to $\mathbb{S}^n$.

\end{proof}

\begin{proof}[Proof of Theorem \ref{thm 4.5 weakly PIC}]
By Theorem \ref{thm Ric}, $M$ has nonnegative Ricci curvature. 
Let $\widetilde{M}$ be the universal cover of $M$. By the Cheeger-Gromoll splitting theorem, $\widetilde{M}$ is isometric to a product of the form 
$N^{n-k} \times \R^k$, where $N$ has positive Ricci curvature and is compact. 

If $n\geq 5$, we can use Theorem 1.8 in \cite{Li21} to conclude that $M$ is locally irreducible, which implies that $k=0$. 
Hence $\widetilde{M}$ is compact. 
Since $\widetilde{M}$ has $4\frac 1 2$-nonnegative curvature operator of the second kind, it also has nonnegative isotropic curvature by Theorem \ref{thm 4.5 PIC alg}. 
One can then invoke the classification of simply-connected locally irreducible closed Riemannian manifolds with nonnegative isotropic curvature (see \cite[Theorem 9.30]{Brendle10book} and \cite{Seshadri09}) to conclude that one of the following statement holds:
\begin{enumerate}
    \item $\widetilde{M}$ is homeomorphic to $\mathbb{S}^n$;
    \item $n=2m$ and $\widetilde{M}$ is a K\"ahler manifold biholomorphic to $\mathbb{CP}^m$;
    \item $n\geq 5$ and $\widetilde{M}$ is isometric to a compact irreducible symmetric space. 
\end{enumerate}
Similar as in the proof of Theorem \ref{thm 4.5 PIC}, the ``homeomorphism" is case (1) can be upgraded to a ``diffeomorphism" if $n\geq 12$ in view of Brendle's work \cite{Brendle19}. 

Finally, we treat the $n=4$ case by dividing it into two subcases. 

\textbf{Case A:} $M$ is locally irreducible. 
In this case, the above argument remains valid and we conclude that $\widetilde{M}$ is either diffeomorphic to $\mathbb{S}^4$ using Hamilton's work \cite{Hamilton97} or $M$ is a K\"ahler manifold biholomorphic to $\mathbb{CP}^2$ using the classification of closed four-manifolds with nonnegative isotropic curvature (see \cite[Theorem 4.10]{MW93}). 

\textbf{Case B:} $M$ is locally reducible. 
We observe that in this case $\widetilde{M}$ must be isometric to $N^3 \times \R$, as it cannot split as the product of two manifolds of dimension two by Theorem 5.1 in \cite{Li21}. 
Now we have that $N$ is a closed and simply-connected three-manifold which is locally irreducible. Since $N \times \R$ has nonnegative isotropic curvature, we conclude that $N$ must have nonnegative Ricci curvature. Thus $N$ is diffeomorphic to $\mathbb{S}^3$ by Hamilton's classification of closed three-manifolds with nonnegative Ricci curvature \cite{Hamilton86}. 
Hence $\widetilde{M}$ is diffeomorphic to $\mathbb{S}^3 \times \R$. 
This finishes the proof. 
\end{proof}

\section*{Acknowledgments}
The author thanks Professor Xiaodong Cao for his interest in this work and some helpful discussions. He is also grateful to the anonymous referee for catching some typos and errors, pointing out Theorem 1.8, and making comments and suggestions which improved the readability of this paper.

\section*{Funding and Competing interests}
The author's research is partially supported by a start-up grant at Wichita State University. 
The author has no other competing interests to declare that are relevant to the content of this article.

\bibliographystyle{alpha}
\bibliography{ref}
\end{document}